\newtheorem{theorem}{Theorem}[section]
\newtheorem{proof}{Proof}
\newtheorem{lemma}[theorem]{Lemma}
\newtheorem{remark}{Remark}
\newcommand{\ep}{\varepsilon}
\newtheorem{example}[theorem]{Example}
\title{\Large \bf Adaptation and migration of a population between patches}
\author{Sepideh Mirrahimi  \thanks{
CMAP, Ecole Polytechnique, CNRS, INRIA. Route de Saclay, 91128 Palaiseau Cedex, France.  Email: mirrahimi@cmap.polytechnique.fr.}
%\and{}
}
\date{\today}
\begin{document}
\maketitle

\pagestyle{plain}
%\tableofcontents
\pagenumbering{arabic}

%The abstract of your paper
\begin{abstract}
 A Hamilton-Jacobi formulation has been established previously for phenotypically structured population models where the solution concentrates as Dirac masses in the limit of small diffusion. Is it possible to extend this approach to spatial models? Are the limiting solutions still in the form of sums of Dirac masses? Does the presence of several habitats lead to polymorphic situations? 

We study the stationary solutions of a structured population model, while the population is structured by continuous phenotypical traits and discrete positions in space. The growth term varies from one habitable zone  to another, for instance because of a change in the  temperature. The individuals can migrate from one zone to another with a constant rate. The mathematical modeling of this problem, considering mutations between phenotypical traits and competitive interaction of individuals within each zone via a single resource, leads to a system of coupled parabolic integro-differential equations. We study the asymptotic behavior of the stationary solutions to this model in the limit of small mutations. The limit, which is a sum of Dirac masses, can be described with the help of  an effective Hamiltonian. The presence of migration can modify the dominant traits and lead to polymorphic situations. 
\end{abstract}

\noindent{\bf Key-Words:} Structured populations, phenotypical and spatial structure, Hamilton-Jacobi equation, viscosity solutions, Dirac concentrations, stationary solutions

\noindent{\bf AMS Class. No:} 35B25,  47G20, 49L25, 92D15

%-----------------------------
\section{Introduction}
\label{sec:intro}

% Papers with space structure?
Non-local Lotka-Volterra equations arise in models of adaptive evolution of phenotypically structured populations. These equations have the  property that the solutions concentrate generally, in the limit of small diffusion, on several isolated points, corresponding to distinct traits. Can we generalize these models by adding a spatial structure? How do the dominant traits evolve if we introduce a new habitat? To understand the interaction of ecological and evolutionary processes in population dynamics, spatial structure of the communities and adaptation of species to the environmental changes, it is crucial to dispose mathematical models that describe them jointly. We refer to \cite{SL.MB:08} and the references therein for general literature on the subject.  In this manuscript we consider a model where several distinct favorable habitable zones are possible. Population dynamics models  structured by spatial patches have been studied using both deterministic and probabilistic methods (see for instance \cite{AS.GM:09,VB.AL:12}). Our model, in the case of two patches, is indeed  very close to the one studied in \cite{AS.GM:09} where the authors use adaptive dynamics theory (adaptive dynamics is a theory, based on dynamical systems and their stability, to study population dynamics \cite{OD:04}). Here we model similar phenomena, by adding a spatial structure to an earlier known integro-differential model describing the darwinian evolution. Integro-differential models have the advantage that the mutations can be considered directly in the model without assuming a separation of time scales of evolution and ecology.  The present work provides a general description of the asymptotic stationary solutions, in the general case where two or several patches are possible.

We study the asymptotic behavior of solutions of a system of coupled elliptic integro-differential equations with small diffusion terms. These solutions are the stationary solutions to a parabolic system describing the dynamics of a population density.  The individuals are characterized by phenotypical traits, that we denote by $x\in \mathbb{R}^d$. They can move between two or several patches, which are favorable habitable zones, with constant rates (that we denote by $\nu^1$ and $\nu^2$ in the case of two patches). The mathematical modeling is based on the darwinian evolution and takes into account mutations and competition between the traits. There is a large literature for mathematical modeling and analysis on the subject of adaptive evolution, we refer the interested reader to \cite{SG.EK.GM.JM:97,SG.EK.GM.JM:98,OD:04,OD.PJ.SM.BP:05,GM.MG:05,LD.PJ.SM.GR:08}. Here, we represent the birth and death term by a net growth term $R^i(x,I^i)$ that is different in each patch, for instance because of a change in the temperature, and depends on the integral parameter $I^i$, which corresponds to the the pressure exerted by the whole population within patch $i$ on the resource. To model the mutations, we use Laplace terms with a small rate $\ep$ that is introduced to consider only rare mutations. We study the asymptotic behavior of stationary solutions as the mutation rate $\ep$ goes to $0$. The asymptotic solutions are generally concentrated on one or several Dirac masses. We describe the position and the weight of these Dirac masses using a Hamilton-Jacobi approach.

The time-dependent model,  in the case of two patches, is written as
\begin{equation}\label{main}\left\{\begin{array}{rlr}
\partial_t n_\ep^1-\ep \Delta n_\ep^1&=\frac 1\ep n_\ep^1 R^1(x,I_\ep^1)+\frac 1 \ep \nu^2 n_\ep^2-\frac 1 \ep\nu^1 n_\ep^1,&
\\&& \quad x\in \mathbb{R}^d,\\
\partial_t n_\ep^2-\ep \Delta n_\ep^2&=\frac 1 \ep n_\ep^2 R^2(x,I_\ep^2)+\frac 1 \ep \nu^1 n_\ep^1-\frac 1\ep\nu^2 n_\ep^2,&
\end{array}\right.
\end{equation}
with 
\begin{equation}\label{Iei}
I_\ep^i=\int \psi^i(x)n_\ep^i(x)dx,\qquad \text{for $i=1,2.$}
\end{equation}
Such models, without the structure in space, have been derived from stochastic individual based models in the limit of large populations (see \cite{NC.RF.SM:08,NC..RF.SM:08bis}). This  manuscript follows earlier works on parabolic Lotka-Volterra type equations to study concentration effects in models of phenotypically structured populations, that are based on a Hamilton-Jacobi formulation (see \cite{OD.PJ.SM.BP:05,GB.BP:08,GB.SM.BP:09,AL.SM.BP:10}). The novelty of our work is that we add a spatial structure to the model by considering a finite number of favorable habitable zones. We thus have a system instead of a single equation. A Hamilton-Jacobi approach in the case of systems has also been introduced in \cite{JC.SC.BP:07} for an age structured model. See also \cite{AC.SC:05} for a study of stationary solutions of the latter system. The Hamilton-Jacobi approach can also be used in problems other than adaptive evolution to prove concentration phenomena. See for instance \cite{BP.PS:09bis,BP.PS:09,SM.PS:12} where  related methods have been used to study the motion of motor proteins. 

We are interested in the equilibria of \ref{main} limited to a bounded domain, that are given by solutions of the following system
\begin{equation}\label{stat}\left\{\begin{array}{rll}
-\ep^2 \Delta n_\ep^1&=n_\ep^1 R^1(x,I_\ep^1)+\nu^2 n_\ep^2-\nu^1 n_\ep^1&\quad \text{in $B_L(0)$},\\\\
-\ep^2 \Delta n_\ep^2&=n_\ep^2 R^2(x,I_\ep^2)+\nu^1 n_\ep^1-\nu^2 n_\ep^2&\quad \text{in $B_L(0)$},\\\\
\nabla n_\ep^i\cdot \vec{n}&=0&\quad \text{in $\partial B_L(0)$ and for $i=1,\,2$},
\end{array}\right.
\end{equation}
where $B_L(p)$ is a ball of radius $L$ with center in $p$ and $\vec{n}(x)$ is the unit normal vector, at the point $x\in \partial B_L(0)$, to the boundary of $B_L(0)$. The Neumann boundary condition is a way to express that mutants cannot be born in $\mathbb{R}^d\setminus B_L(0)$.

To formulate our results we introduce the assumptions we will be using throughout the paper. We assume that, there exist positive constants $a_m$ and $a_M$ such that
\begin{equation}\label{as:psi}\psi^1=\psi^2=\psi,\quad a_m\leq\psi(x)\leq a_M, \quad  \|\psi(x)\|_{W^{2,\infty}}\leq A \quad \text{and}\quad \nabla \psi\cdot  \vec{n}=0 \text{ in $\partial B_L(0)$}.\end{equation}
Moreover there exist positive constants $I_m$, $I_M$, $\delta$ and $C$ such that, for all $x\in B_L(0)$ and $i,j=1,\,2$,
\begin{equation}\label{as:quad1}
\delta\leq\min \left(R^i(x,\frac{\nu^j}{\nu^i} I_m),R^i(x,I_m)\right),\quad\max \left(R^i(x,\frac{\nu ^j }{\nu^i} I_M),R^i(x,I_M)\right)\leq -\delta,
\end{equation}
\begin{equation}\label{as:quad2}
 -C\leq \frac{\partial R^i}{\partial I}(x,I)\leq -\frac 1 C,
\end{equation}
\begin{equation}\label{as:R2}
-D\leq R_{\xi\xi}^i(x,I),\quad \text{for $x\in B_L(0)$, $I\in [I_m,I_M]$, $\xi\in \mathbb{R}^d$, $|\xi|=1$ and $i=1,2$.}
\end{equation}
We use the Hopf-Cole transformation
\begin{equation} \label{Hopf}n_\ep^i =\exp(\frac{u_\ep^i}{\ep})  ,\qquad \text{for $i=1,2,$}\end{equation}
and replace the latter in the system satisfied by $n_\ep^i$ to obtain
\begin{equation}\label{equ}
\left\{\begin{array}{rll}
-\ep \Delta u_\ep^1&=|\nabla u_\ep^1|^2+ R^1(x,I_\ep^1)+\nu^2 \exp(\frac{u_\ep^2-u_\ep^1}{\ep})-\nu^1,&\quad \text{in $B_L(0)$},\\\\
-\ep \Delta u_\ep^2&=|\nabla u_\ep^2|^2+R^2(x,I_\ep^2)+\nu^1 \exp(\frac{u_\ep^1-u_\ep^2}{\ep}) -\nu^2&\quad \text{in $B_L(0)$},\\\\
 \nabla u_\ep^i\cdot \vec{n}&=0&\quad \text{in $\partial B_L(0)$} \\
 &&\text{and for $i=1,\,2$}.
\end{array}\right.
\end{equation}
We prove the following
\begin{theorem}\label{thm:main}
Assume \ref{as:psi}--\ref{as:quad2}. Then, as $\ep\to 0$ along subsequences, both sequences $(u_\ep^1)_\ep$ and $(u_\ep^2)_\ep$ converge uniformly in $B_L(0)$ to a continuous function $u\in \mathrm{C}(B_L(0))$ and $(I_\ep^1,I_\ep^2)$ goes to $(I^1,I^2)$, with $(u,I^1,I^2)$ such that $u$ is a viscosity solution to the following equation
\begin{equation}\label{HJ}\left\{ \begin{array}{ll}-|\nabla u|^2= H(x,I^1,I^2),&\quad \text{in $B_L(0)$},\\\\
\max_{x\in B_L(0)}u(x)=0,\end{array}\right.\end{equation}
with 
\begin{equation}\begin{array}{c} \text{$H(x,I^1,I^2)$ the largest eigenvalue of the matrix}\\\displaystyle \label{def:H}\mathcal{A}=\left(\begin{array}{cc} R^1(x,I^1)-\nu^1&\nu^2\\
\nu^1 & R^2(x,I^2)-\nu^2\end{array}\right).\end{array}\end{equation}
%= \left( R^1(x,I^1)+ R^2(x,I^2)-\nu^1-\nu^2\right)+\sqrt{\left( R^1(x,I^1)-\nu^1\right) \left( R^2(x,I^2)-\nu^1\right)-\nu^1\nu^2 } .$$
\end{theorem}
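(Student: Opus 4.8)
The overall strategy is a WKB/large-deviation analysis. With the Hopf-Cole change \ref{Hopf} the vector $(n_\ep^1,n_\ep^2)$ solves the linear system $-\ep^2\Delta n_\ep^i=(\mathcal A\,(n_\ep^1,n_\ep^2)^T)_i$, where $\mathcal A$ is the matrix in \ref{def:H}; at leading order $(n_\ep^1,n_\ep^2)$ should therefore align with a positive eigenvector of $\mathcal A$, so that $-|\nabla u|^2$ equals the associated eigenvalue, which by positivity of the eigenvector is the \emph{principal} one, namely $H$. To turn this into a proof I would first record compactness of the constraints: integrating \ref{stat} over $B_L(0)$ (the Laplacians drop by the Neumann condition), and combining the sign conditions \ref{as:quad1} with the monotonicity \ref{as:quad2} and a maximum-principle argument, yields $I_m\le I_\ep^i\le I_M$, so that $(I_\ep^1,I_\ep^2)\to(I^1,I^2)$ along a subsequence; the same bounds (with $a_m\le\psi\le a_M$) control $\int n_\ep^i$ from above and below by positive constants, and force $R^i$ to stay bounded on $B_L(0)\times[I_m,I_M]$.

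Next I would prove the preliminary estimate $\|u_\ep^1-u_\ep^2\|_{L^\infty}\le C\ep$. Subtracting the equations of \ref{equ} and evaluating at an interior maximum of $w_\ep:=u_\ep^1-u_\ep^2$ (where $\nabla w_\ep=0$ and $-\ep\Delta w_\ep\ge0$, the boundary case handled via \ref{as:psi}) removes the gradient terms and leaves $0\le (R^1-R^2)-\nu^1+\nu^2+\nu^2 e^{-w_\ep/\ep}-\nu^1 e^{w_\ep/\ep}$; since the data are bounded, $e^{\max w_\ep/\ep}$ is bounded, whence $\max w_\ep\le C\ep$, and symmetrically for the minimum. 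Consequently the ratio $r_\ep:=n_\ep^2/n_\ep^1=e^{(u_\ep^2-u_\ep^1)/\ep}$ stays in a fixed compact subset of $(0,\infty)$, so each $n_\ep^i$ solves a scalar equation $-\ep^2\Delta n_\ep^i=g_\ep^i n_\ep^i$ with uniformly bounded potentials $g_\ep^1=R^1-\nu^1+\nu^2 r_\ep$ and $g_\ep^2=R^2-\nu^2+\nu^1/r_\ep$. Rescaling $x=\ep y$ and applying the Harnack inequality on unit balls (with the Neumann condition handled by reflection) then gives, after chaining, a uniform Lipschitz bound on $u_\ep^i$; with the integral bounds this yields $\max_{B_L(0)}u_\ep^i\to0$ and, by Arzelà--Ascoli, uniform convergence of both $u_\ep^1,u_\ep^2$ to a common $u\in \mathrm{C}(B_L(0))$ with $\max u=0$.

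The core is identifying the Hamiltonian. At every point the equations of \ref{equ} give the exact identities $-|\nabla u_\ep^i|^2-\ep\Delta u_\ep^i=g_\ep^i$ together with $(g_\ep^1-(R^1-\nu^1))(g_\ep^2-(R^2-\nu^2))=\nu^1\nu^2$, which is precisely the characteristic relation for $\mathcal A$; since $u_\ep^1,u_\ep^2$ share the limit $u$, the common value $-|\nabla u|^2$ must solve this relation, and positivity of the eigenvector selects the largest root $H$. To run this rigorously I would show $r_\ep\to r^*$, the unique positive root of $\Theta(r,x):=(R^1-R^2)-\nu^1+\nu^2+\nu^2 r-\nu^1/r$. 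Subtracting the equations of \ref{equ} yields, for $\zeta_\ep:=(u_\ep^1-u_\ep^2)/\ep=-\log r_\ep$ (bounded by the previous step), the identity $-\ep^2\Delta\zeta_\ep-\ep(\nabla u_\ep^1+\nabla u_\ep^2)\cdot\nabla\zeta_\ep=\Theta(e^{-\zeta_\ep},x)$. Because $\Theta$ is strictly increasing in $r$ and the drift is uniformly bounded, a half-relaxed-limits argument (testing the bounded family $\zeta_\ep$ against smooth functions, the viscous terms vanishing) forces $\Theta(e^{-\zeta},x)=0$ at every semi-limit, hence $r_\ep\to r^*$ locally uniformly.

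With $r_\ep\to r^*$ the identification closes through the total mass $N_\ep:=n_\ep^1+n_\ep^2$: adding the two equations of \ref{stat} cancels the migration terms and gives $-\ep^2\Delta N_\ep=(\theta_\ep R^1+(1-\theta_\ep)R^2)N_\ep$ with $\theta_\ep=n_\ep^1/N_\ep=1/(1+r_\ep)$. A direct computation using the characteristic equation $(H-(R^1-\nu^1))(H-(R^2-\nu^2))=\nu^1\nu^2$ shows that $\theta R^1+(1-\theta)R^2$ evaluated at $\theta=1/(1+r^*)$ equals exactly the $H(x,I^1,I^2)$ of \ref{def:H}, so the potential converges uniformly to $H$. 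Since $v_\ep:=\ep\log N_\ep=u_\ep^1+O(\ep)\to u$ solves $-\ep\Delta v_\ep=|\nabla v_\ep|^2+\theta_\ep R^1+(1-\theta_\ep)R^2$, the standard vanishing-viscosity stability of viscosity solutions shows that $u$ solves \ref{HJ}. I expect the main obstacle to be exactly the step $r_\ep\to r^*$: the coupling in \ref{equ} is singular in $\ep$ and $\zeta_\ep$ enjoys no gradient bound, so convergence of the proportions cannot come from compactness and must be extracted from the strict monotonicity of $\Theta$ through the relaxed semi-limits; propagating the Neumann condition through the maximum-principle and Harnack steps is the secondary technical point.
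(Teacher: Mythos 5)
Your proposal reaches the correct equation and its overall architecture is sound, but it takes a genuinely different route from the paper at the core identification step. The paper first proves the regularity estimates (the system Harnack inequality of \cite{JB.MS:04} for $|u_\ep^i(x)-u_\ep^j(y)|\le D\ep$, the Bernstein method of \cite{C.I.L:92} for the Lipschitz bound), and then identifies $H$ in one stroke by the perturbed test function method of \cite{LE:89}: it takes the Perron--Frobenius eigenvector $(\chi_\ep^1,\chi_\ep^2)$ of $\mathcal A_\ep$, sets $\phi_\ep^i=\log\chi_\ep^i$, and maximizes $u_\ep^i-\varphi-\ep\phi_\ep^i$ jointly over $x$ \emph{and} the patch index $i$; at such a point the coupling term $\nu^2\exp((u_\ep^2-u_\ep^1)/\ep)$ is dominated by $\nu^2\exp(\phi_\ep^2-\phi_\ep^1)$, and the eigenvalue relation turns $R^1-\nu^1+\nu^2 e^{\phi_\ep^2-\phi_\ep^1}$ exactly into $H$, with no need to know the asymptotics of $n_\ep^2/n_\ep^1$. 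You instead prove convergence of the ratio $r_\ep=n_\ep^2/n_\ep^1$ to the eigenvector ratio $r^*$ (the positive root of your $\Theta$) and then reduce to the scalar equation for $N_\ep=n_\ep^1+n_\ep^2$, whose potential converges to $H$ via the identity $(R^1+r^*R^2)/(1+r^*)=H$ (which I checked; it is correct). Your route proves strictly more than the paper's --- locally uniform convergence of the proportions, which the paper only recovers at the level of limit measures in \ref{eq:rho} --- and your reduction to scalar Harnack via the maximum-principle bound $\|u_\ep^1-u_\ep^2\|_{L^\infty}\le C\ep$ is more elementary than invoking the system Harnack inequality. The price is that the scalar monotonicity trick for $\Theta$ is special to two patches: for $K$ patches the ratios solve a coupled system, whereas the paper's perturbed-test-function argument extends verbatim (Theorem \ref{thm:multimain}).

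There is, however, one real gap as written. Your half-relaxed-limits step needs $\ep\,|\nabla u_\ep^i(x_\ep)|\to 0$ at the touching points, in order to kill the drift $\ep(\nabla u_\ep^1+\nabla u_\ep^2)\cdot\nabla\varphi$; this is a \emph{pointwise} gradient bound. Chaining Harnack estimates only yields the oscillation bound $|u_\ep^i(x)-u_\ep^i(y)|\le C(|x-y|+\ep)$, which does not control $\nabla u_\ep^i$ pointwise: a function such as $\ep\sin(x/\ep^2)$ satisfies that bound while its gradient is of size $1/\ep$, exactly large enough to keep the drift term alive. You must add either the Bernstein argument the paper uses, or interior elliptic gradient estimates for the rescaled linear equation $-\Delta\widetilde n_\ep^i=\widetilde g_\ep^i\,\widetilde n_\ep^i$ (bounded potential), which combined with Harnack give $|\nabla\widetilde n_\ep^i|\le C\widetilde n_\ep^i$ and hence $|\nabla u_\ep^i|\le C$ away from $\partial B_L(0)$. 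Two minor slips: deriving $I_m\le I_\ep^i\le I_M$ by unweighted integration of \ref{stat} bounds $\int n_\ep^i$, and converting to $I_\ep^i$ costs a factor $a_M/a_m$ that no longer matches the thresholds in \ref{as:quad1}; multiply by $\psi$ as the paper does, where the Laplacian term is then $O(\ep^2)$ (not zero) thanks to $\|\psi\|_{W^{2,\infty}}\le A$ and $\nabla\psi\cdot\vec n=0$. Finally, the boundary case of your maximum-principle argument is handled by the Neumann condition in \ref{equ}, not by \ref{as:psi}.
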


The function $H$ is indeed an effective Hamiltonian that contains information from the two patches and helps us in Theorem \ref{thm:as} to describe the support of  the weak limits of $(n_\ep^1,n_\ep^2)$ as $\ep\to 0$. 
We can interpret $H(x,I^1,I^2)$ as the fitness
of the system in the limit of $\ep\to 0$  (see \cite{JM.RN.SG:92} for the definition of fitness). 

The difficulty here is to find appropriate regularity estimates on $u_\ep^i$, that we obtain using the Harnack inequality \cite{JB.MS:04} and the Bernstein method \cite{C.I.L:92}. To prove convergence to the Hamilton-Jacobi equation, we are inspired from the method of perturbed test functions in homogenization \cite{LE:89}.

The above information on the limit of $u_\ep^i$ allows us to describe the limit of the densities $n_\ep^i$ as $\ep$ vanishes. We prove

\begin{theorem}\label{thm:as}
Assume \ref{as:psi}--\ref{as:R2}. Consider a subsequence such that $u_\ep^1$ and $u_\ep^2$ converge uniformly to $u\in C\left(B_L(0)\right)$ and $(I_\ep^1,I_\ep^2)$ goes to $(I^1,I^2)$, as $\ep \to 0$, with $(u,I^1,I^2)$ solution of \ref{HJ}. Let $n_\ep^i$, for $i=1,2$, converge weakly in the sense of measures to $n^i$ along this subsequence.  We have
 \begin{equation} \label{eq:supp}\mathrm{supp } \;n^i\subset \Omega\cap \Gamma,\quad \text{for $i=1,2$,}\end{equation}
  with
  \begin{equation}\label{Gamma}
  \begin{array}{c}
\Omega=\{x\in B_L(0) \,| \, u(x)=0\},\\ \Gamma=\{x\in  B_L(0)\,| \, H(x,I^1,I^2)=\max_{x\in B_L(0)}H(x,I^1,I^2)=0\}.
\end{array}
\end{equation}
Moreover, we have%for all $x\in \mathrm{supp }\,n^1= \mathrm{supp }\,n^2$,
\begin{equation}\label{dist}
\left(R^1(x)-\nu^1\right)n^1(x)+\nu^2n^2(x)=0,\quad \left(R^2(x)-\nu^2\right)n^2(x)+\nu^1n^1(x)=0, \quad \text{in }B_L(0)
\end{equation}
in the sense of distributions. The above condition is coupled by
\begin{equation}\label{eq:I1I2first}
\int_{B_L(0)} \psi^i(x)n^i(x)=I^i.
\end{equation}
% $\left(\begin{array}{c}n^1(x)\\n^2(x)\end{array}\right)$, is the eigenvector corresponding to the largest eigenvalue of $\mathcal{A}$ at the point $x$, which is $0$.
\end{theorem}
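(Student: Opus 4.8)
The plan is to establish the three assertions separately, passing to the limit $\ep\to 0$ in the stationary system \ref{stat} through the Hopf-Cole relation \ref{Hopf}, while using the viscosity characterization of $u$ provided by Theorem \ref{thm:main}. First I would prove the inclusion $\mathrm{supp}\,n^i\subset\Omega$. Since $n_\ep^i=\exp(u_\ep^i/\ep)$ and $u_\ep^i\to u$ uniformly with $\max_{B_L(0)}u=0$, on every compact subset of the open set $\{u<0\}$ we have $u_\ep^i\le -c<0$ for $\ep$ small, hence $n_\ep^i\le e^{-c/\ep}\to 0$ uniformly there. Passing to the weak limit, $n^i$ charges no such compact set, so $\mathrm{supp}\,n^i\subset\Omega=\{u=0\}$.

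Next I would prove $\Omega\subset\Gamma$, which gives $\mathrm{supp}\,n^i\subset\Omega\cap\Gamma$. The first point is that $\max_{B_L(0)}H=0$. Since $u$ is Lipschitz (from the gradient bounds established in the proof of Theorem \ref{thm:main}) and is a viscosity solution of $-|\nabla u|^2=H$, it satisfies the equation pointwise at each point of differentiability, so $H(x,I^1,I^2)=-|\nabla u(x)|^2\le 0$ for a.e.\ $x$; continuity of $H$, inherited from that of the $R^i$ and from the eigenvalue formula \ref{def:H}, extends $H\le 0$ to all of $B_L(0)$. At an interior maximum point $x_0$ of $u$, where $u(x_0)=0$, the constant function $\phi\equiv 0$ touches $u$ from above, and the viscosity inequality at $x_0$ reads $H(x_0,I^1,I^2)\ge 0$; combined with $H\le 0$ this forces $H(x_0,I^1,I^2)=0=\max_{B_L(0)}H$, i.e.\ $x_0\in\Gamma$. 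Maxima lying on $\partial B_L(0)$ are reduced to this interior argument using the Neumann condition $\nabla u_\ep^i\cdot\vec n=0$.

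For the distributional identities \ref{dist} I would pass to the limit directly in \ref{stat}. Multiplying the first equation by $\varphi\in C^2(\overline{B_L(0)})$ with $\nabla\varphi\cdot\vec n=0$ on $\partial B_L(0)$ and integrating by parts twice (all boundary terms vanish by the Neumann conditions on $n_\ep^1$ and on $\varphi$) gives $-\ep^2\int n_\ep^1\Delta\varphi=\int[R^1(x,I_\ep^1)-\nu^1]\varphi\,n_\ep^1+\nu^2\int\varphi\,n_\ep^2$. Because $\psi\ge a_m>0$ and $I_\ep^i\to I^i$, the masses are uniformly bounded, $\int n_\ep^i\le I_\ep^i/a_m\le C$, so the left-hand side is $O(\ep^2)\to 0$; on the right, $R^i(x,I_\ep^i)\to R^i(x,I^i)$ uniformly in $x$ by \ref{as:quad2} while $n_\ep^i\rightharpoonup n^i$, so the products converge and we obtain the first identity in \ref{dist}, the second being symmetric. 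Finally, \ref{eq:I1I2first} follows immediately from $I_\ep^i=\int\psi\,n_\ep^i\to\int\psi\,n^i$, by weak convergence tested against the bounded continuous function $\psi$, together with $I_\ep^i\to I^i$.

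I expect the main obstacle to be the inclusion $\Omega\subset\Gamma$, and specifically the rigorous passage from the viscosity formulation to the pointwise identities $\max_{B_L(0)}H=0$ and $H=0$ on $\Omega$, including the treatment of maxima on $\partial B_L(0)$ via the Neumann boundary condition. The remaining two assertions are comparatively routine weak-limit arguments, once the uniform $L^1$ bound on the $n_\ep^i$ and the uniform convergence $R^i(\cdot,I_\ep^i)\to R^i(\cdot,I^i)$ are in hand.
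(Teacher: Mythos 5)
Your proof is correct, but it reaches the crucial inclusion $\Omega\subset\Gamma$ by a genuinely different route than the paper. Both proofs obtain $\mathrm{supp}\,n^i\subset\Omega$ from the Hopf-Cole transformation, both get $H\le 0$ everywhere from Lipschitz regularity of $u$ (Rademacher a.e.\ differentiability, pointwise validity of \ref{HJ} at differentiability points, continuity of $H$), and both treat \ref{dist} and \ref{eq:I1I2first} by passing to the limit in the weak formulation of \ref{stat} exactly as you do (the paper tests with $\xi\in \mathrm{C}^\infty_{\mathrm{c}}(B_L(0))$, which is all that ``in the sense of distributions in $B_L(0)$'' requires and avoids boundary terms altogether, whereas your Neumann test functions are an unnecessary but harmless strengthening). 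The difference is in proving $H=0$ on $\Omega$: the paper differentiates \ref{equ} twice in a Bernstein-type argument to show that the $u_\ep^i$ are uniformly semiconvex --- this is precisely where assumption \ref{as:R2} enters --- so that the limit $u$ is semiconvex, hence differentiable at its maximum points with $\nabla u=0$, which forces $H=0$ there. You instead observe that every point of $\Omega$ is a global maximum of $u$ (since $\max_{B_L(0)}u=0$), touch $u$ from above by the constant test function $\varphi\equiv 0$, and invoke the subsolution half of \ref{HJ} (with the paper's convention, a maximum point of $u-\varphi$ yields $-|\nabla\varphi|^2\le H$), getting $H\ge 0$ on $\Omega$. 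Your argument is shorter and never uses \ref{as:R2}, so it establishes the theorem under \ref{as:psi}--\ref{as:quad2} alone; what the paper's heavier semiconvexity route buys is the stronger structural information (uniform semiconvexity of the $u_\ep^i$, differentiability of $u$ and $\nabla u=0$ on $\Omega$) that is useful elsewhere in this literature. One caveat applies equally to both proofs: the argument at maximum points is carried out only at interior points, and the case of a maximum sitting on $\partial B_L(0)$ is dispatched by an appeal to the Neumann condition without detail; your one-sentence reduction is no less rigorous than the paper's, but note that under the generalized (viscosity) Neumann condition the constant test function gives no information at a boundary maximum, so a fully careful treatment there would need an extra argument in either approach.
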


%\begin{remark}
Theorem \ref{thm:as} provides us  with a set of algebraic constraints on the limit, which allows us to describe the latter. 
In particular, if the support of $n^i$, for $i=1,2$, is a set of distinct points: $\mathrm{supp}\,n^i\subset \{x_1,x_2,\cdots,x_k\}$,  \ref{dist} implies that
\begin{equation}\label{eq:ni}
n^i=\sum_{j=1}^k\rho^i_j \delta(x-x_j),\qquad \text{for $i=1,2$,}\end{equation}
with
\begin{equation}\label{eq:rho}
\rho_j^2=\rho_j^1\left(\frac{\nu^1-R^1(x_j,I^1)}{\nu^2}\right)=\rho_j^1\left(\frac{\nu^1}{\nu^2-R^2(x_j,I^2)}\right).\end{equation}
Furthermore, the weights  $(\rho^i_1,\cdots,\rho^i_k)$ satisfy the normalization condition
\begin{equation}\label{eq:I1I2}
\sum_{j=1}^k \psi^i(x_j)\rho^i_j=I^i,\qquad \ \text{for $i=1,2$.}\end{equation}
Condition \ref{eq:rho} means that the vector $\left(\begin{array}{c} \rho_j^1\\\rho_j^2 \end{array}\right)$ is the eigenvector corresponding to the largest eigenvalue of the matrix $\mathcal{A}$ at the point $x_j$, which is $0$. Thereby \ref{eq:rho} implies once again that $\mathrm{supp}\,n^i\subset \Gamma$. 

%\end{remark}

We point out that since $n^i$, for $i=1,2$, is such that the fitness $H$ vanishes on the support of $n^i$ and is negative outside the support, we can interpret $n^i$ as evolutionary stable distribution of the model. In adaptive dynamics, evolutionary stable distribution (ESD) corresponds to a distribution that remains stable after introduction of  small mutants (see \cite{JM.GP:73,IE:83,PJ.GR:09} for a more detailed definition).  See also \cite{LD.PJ.SM.GR:08,Raoul2009-3} for related works on  stability and convergence to ESD for trait-structured integro-differential models. \\

The set of assumptions in Theorem \ref{thm:as} allows us to describe the asymptotics of the stationary solutions, in the limit of rare or small mutations. In Section \ref{sec:ex} we provide some examples where based on this information we can describe  the asymptotics. In particular, we notice that the introduction of a new environment can lead to dimorphic situations. We refer to \cite{NC.PJ:10} for a related work using the Hamilton-Jacobi approach, where polymorphic situations  can also appear in a model with multiple resources.\\

The paper is organized as follows. In Section \ref{sec:reg} we prove some bounds on $I_\ep$ and some regularity properties on $u_\ep$ that allow us to pass to the limit as $\ep\to 0$ and derive the Hamilton-Jacobi equation with constraint. Theorem \ref{thm:main} is proved in Section \ref{sec:HJ}. Using the results obtained on the asymptotic behavior of $(u_\ep^i)_\ep$ we prove Theorem \ref{thm:as} in Section \ref{sec:as}. In Section \ref{sec:ex} we provide some examples where the information given by Theorem \ref{thm:main} and Theorem \ref{thm:as} allows us to describe the limit. The asymptotic behavior of the stationary solutions in a more general framework, where more than two habitable zones are considered, is given in Section \ref{sec:ext}. Finally in Section \ref{sec:num} we present some numerical simulations for the time-dependent problem and compare them with the behavior of stationary solutions.

\section{Regularity results}\label{sec:reg}

\begin{lemma}Under assumptions \ref{as:psi}--\ref{as:quad2} we have, for $\ep\leq \ep_0$ chosen small enough,
\begin{equation}\label{boundI}
I_m\leq I_\ep^i\leq I_M, \qquad \text{for $i=1,\,2$}.
\end{equation}
In particular, along subsequences, $(I_\ep^1,I_\ep^2)_\ep$ converges to $(I^1,I^2)$, with $I_m\leq I^1,\; I^2\leq I_M$. 
\end{lemma}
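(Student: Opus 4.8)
The plan is to test the stationary system \ref{stat} against the weight $\psi$ and to exploit both the monotonicity of $R^i$ in $I$ from \ref{as:quad2} and the cross-patch thresholds in \ref{as:quad1}. First I would multiply the first equation of \ref{stat} by $\psi$, integrate over $B_L(0)$, and apply Green's identity; since $\nabla n_\ep^i\cdot\vec n=0$ by \ref{stat} and $\nabla\psi\cdot\vec n=0$ by \ref{as:psi} on $\partial B_L(0)$, all boundary terms vanish and the Laplacian transfers onto $\psi$. Recalling \ref{Iei}, this gives the patch-$1$ identity
\[
\nu^1 I_\ep^1-\nu^2 I_\ep^2=\int_{B_L(0)}\psi\,n_\ep^1\,R^1(x,I_\ep^1)\,dx+\ep^2\int_{B_L(0)} n_\ep^1\,\Delta\psi\,dx,
\]
together with the symmetric patch-$2$ identity obtained by interchanging the indices $1\leftrightarrow 2$. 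Since $n_\ep^i=\exp(u_\ep^i/\ep)>0$ by \ref{Hopf}, each $I_\ep^i>0$; moreover $\int n_\ep^i\,dx\le a_m^{-1}I_\ep^i$ and $|\Delta\psi|\le C_0$ (with $C_0$ depending only on $A$ and $d$ through \ref{as:psi}), so the last term is bounded by $\ep^2 C_0 a_m^{-1}I_\ep^i$. Choosing $\ep_0$ so that $\ep^2 C_0 a_m^{-1}\le \delta/2$ for $\ep\le\ep_0$ makes this correction strictly dominated by the reaction contribution.

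For the upper bound I would argue by contradiction. Suppose $I_\ep^1\ge I_M$. Then $R^1(x,I_\ep^1)\le R^1(x,I_M)\le-\delta$ on $B_L(0)$ by \ref{as:quad2} and \ref{as:quad1}, so the patch-$1$ identity gives $\nu^1 I_\ep^1-\nu^2 I_\ep^2\le(-\delta+\ep^2 C_0 a_m^{-1})I_\ep^1\le-\tfrac\delta2 I_\ep^1<0$, whence $\nu^2 I_\ep^2>\nu^1 I_\ep^1$ and in particular $I_\ep^2>\tfrac{\nu^1}{\nu^2}I_\ep^1\ge\tfrac{\nu^1}{\nu^2}I_M$. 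Feeding this into the threshold $R^2(x,\tfrac{\nu^1}{\nu^2}I_M)\le-\delta$ of \ref{as:quad1} and using monotonicity once more yields $R^2(x,I_\ep^2)\le-\delta$; the patch-$2$ identity then produces the reversed inequality $\nu^1 I_\ep^1>\nu^2 I_\ep^2$, a contradiction. Hence $I_\ep^1<I_M$, and the same computation with the roles of the two patches exchanged gives $I_\ep^2<I_M$.

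The lower bound $I_\ep^i\ge I_m$ follows by the mirror-image argument: assuming $I_\ep^1\le I_m$ forces $R^1(x,I_\ep^1)\ge\delta$, hence $\nu^1 I_\ep^1>\nu^2 I_\ep^2$ and $I_\ep^2<\tfrac{\nu^1}{\nu^2}I_m$, which through the threshold $R^2(x,\tfrac{\nu^1}{\nu^2}I_m)\ge\delta$ in \ref{as:quad1} gives $R^2(x,I_\ep^2)\ge\delta$ and then the reversed inequality $\nu^2 I_\ep^2>\nu^1 I_\ep^1$, again a contradiction. With $I_m\le I_\ep^i\le I_M$ established for both patches, $(I_\ep^1,I_\ep^2)_\ep$ ranges in the compact box $[I_m,I_M]^2$, so Bolzano--Weierstrass yields a subsequence converging to some $(I^1,I^2)\in[I_m,I_M]^2$. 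The main obstacle, and the precise reason the cross-patch thresholds of \ref{as:quad1} are needed, is that a single identity cannot close the estimate: the migration term $\nu^2 I_\ep^2$ carries the wrong sign to be absorbed. The \emph{two-step} mechanism—an excess in one patch forcing, through the migration balance, an even larger excess in the other, which then reacts back against it—is exactly what produces the contradiction, while keeping the $\ep^2$ Laplacian term genuinely lower order (via $\int n_\ep^i\le a_m^{-1}I_\ep^i$ and the smallness of $\ep$) is the routine but necessary bookkeeping.
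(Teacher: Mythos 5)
Your proof is correct and follows essentially the same route as the paper: multiply \ref{stat} by $\psi$, integrate using the Neumann conditions in \ref{as:psi} (so the $\ep^2$ diffusion term is controlled via $\|\psi\|_{W^{2,\infty}}\le A$ and $\psi\ge a_m$), then use the monotonicity \ref{as:quad2} with the cross-patch thresholds of \ref{as:quad1} to show that an excess $I_\ep^1\ge I_M$ forces $I_\ep^2\ge \tfrac{\nu^1}{\nu^2}I_M$, whence $R^2(x,I_\ep^2)\le-\delta$ and a contradiction for $\ep$ small. The only cosmetic difference is the final step: the paper adds the two integrated identities so the migration terms cancel and the sum $I_\ep^1+I_\ep^2$ yields the contradiction, while you keep the patch identities separate and derive the two opposite strict inequalities $\nu^2 I_\ep^2>\nu^1 I_\ep^1$ and $\nu^1 I_\ep^1>\nu^2 I_\ep^2$ --- equivalent bookkeeping.
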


\begin{remark}
This is the only part, where we use Assumption \ref{as:psi}. If $(n_\ep^1,n_\ep^2)$ is a solution of \ref{stat} such that \ref{boundI} is satisfied, then the results of Theorems \ref{thm:main} and \ref{thm:as} hold true without necessarily assuming  \ref{as:psi}. In particular, one can take $\psi^1\not\equiv \psi^2$.
\end{remark}

\begin{proof}
We prove the result by contradiction. We suppose that $I_\ep^1> I_M$ (the case with $I_\ep^2>I_M$, and the inequalities from below can be treated following similar arguments).
We multiply the first equation in \ref{stat}  by $\psi(x)$, integrate, and use \ref{as:psi} to obtain
$$-\ep^2\frac{A}{a_m}I_\ep^1\leq\int \psi(x) n_\ep^1(x)R^1(x,I_\ep^1)dx+\nu^2 I_\ep^2-\nu^1I_\ep^1.$$
Using now \ref{as:quad1}, \ref{as:quad2} and the fact that $I_\ep^1>I_M$ we deduce that, for $\ep\leq \ep_0$ small enough,
$$0\leq\left(\delta-\ep^2\frac{A}{a_m}\right)I_\ep^1 \leq \nu^2 I_\ep^2-\nu^1I_\ep^1,
$$
and thus 
$$\frac{\nu^1}{\nu^2}\,I_M\leq I_\ep^2.$$
Now we multiply the equations in \ref{stat}  by $\psi(x)$, integrate and add them and use \ref{as:psi} to obtain
$$-\ep^2\frac{A}{a_m}(I_\ep^1+I_\ep^2) \leq \int \psi(x) n_\ep^1(x)R^1(x,I_\ep^1)dx+\int \psi(x) n_\ep^2(x)R^2(x,I_\ep^2)dx.$$
From  \ref{as:quad1}, \ref{as:quad2} and  the above bounds on $I_\ep^1$ and $I_\ep^2$ it follows that
$$-\ep^2\frac{A}{a_m}(I_\ep^1+I_\ep^2) \leq -\delta(I_\ep^1+I_\ep^2),$$
which is not possible if $\ep$ is small enough. We conclude that $I_\ep^1\leq I_M$.

\end{proof}

\begin{theorem}\label{thm:reg}
Assume \ref{as:psi}--\ref{as:quad2}. Then \\
(i) there exists a positive constant $D$, such that for $\ep\leq \ep_0$,
\begin{equation} \label{harnack}
|u_\ep^i(x)-u_\ep^j(y)|\leq D\ep,\qquad \text{for all $x,\,y\in B_L(0)$, $|x-y|\leq \ep$ and $i,\,j\in \{1,2\}$.}
\end{equation}
(ii) For $i=1,\,2$ and all $\ep\leq \ep_0$, the family $(u_\ep)_\ep$ is uniformly Lipschitz and uniformly bounded from below.\\
%(iii) There exists a positive constant $C_2$  such that
%\begin{equation}\label{ubound} -C_2\leq u_\ep^i(x).\end{equation}
%(ii) There exists a positive constant $C_1$  such that
%\begin{equation}\label{ubound} -C_1\leq u_\ep^i(x).\end{equation}
(iii)  For all $a>0$, there exists $\ep_1=\ep_1(a)$ such that for all $\ep\leq \ep_1$,
\begin{equation}\label{lbound}u_\ep^i(x)\leq a,\qquad \text{for $x\in B_L(0)$ and $i=1,\,2$.}
\end{equation}
%(iv) There exists a positive constant $C_2$ such that
%\begin{equation}\label{Lipbound}
%|\nabla u_\ep^i (x)|\leq C_2,\qquad\text{for $x\in \mathbb{R}^d$ and $i=1,2$}.
%\end{equation}

\end{theorem}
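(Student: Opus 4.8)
The plan is to establish the three estimates in order, using (i) as the engine for (ii) and (iii). Throughout I use the bound $I_m\le I_\ep^i\le I_M$ of the previous Lemma, which makes the zeroth--order coefficients $R^i(x,I_\ep^i)$ bounded uniformly in $\ep$ on $B_L(0)\times[I_m,I_M]$.

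To prove (i) I would argue on the densities $n_\ep^i=\exp(u_\ep^i/\ep)$ and rescale space by $\ep$. Writing $\bar n^i(y):=n_\ep^i(\ep y)$, the system \ref{stat} becomes
\begin{equation*}
-\Delta \bar n^i=\big(R^i(\ep y,I_\ep^i)-\nu^i\big)\bar n^i+\nu^j\bar n^j,\qquad \{i,j\}=\{1,2\},
\end{equation*}
a linear, uniformly elliptic, \emph{cooperative} system (the off--diagonal couplings $\nu^i$ are positive) whose coefficients are bounded independently of $\ep$. The Harnack inequality for cooperative systems of \cite{JB.MS:04} then provides, on balls of radius $2$ in the rescaled variable, a constant $C$ independent of $\ep$ and of the center such that $\sup_{B_1}\bar n^i\le C\inf_{B_1}\bar n^j$ for all $i,j$; near $\partial B_L(0)$ one uses the Neumann condition in \ref{stat} together with a boundary version adapted to it. Undoing the scaling, any $x,y\in B_L(0)$ with $|x-y|\le\ep$ correspond to rescaled points at distance $\le1$, so $n_\ep^i(x)\le C\,n_\ep^j(y)$; taking logarithms and multiplying by $\ep$ gives \ref{harnack} with $D=\log C$.

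Estimate (i) is what makes the rest work. Taking $x=y$ and $i\ne j$ in \ref{harnack} yields $|u_\ep^1-u_\ep^2|\le D\ep$, so the exponential couplings in \ref{equ} satisfy $\exp((u_\ep^j-u_\ep^i)/\ep)\in[e^{-D},e^{D}]$ and the right--hand sides $h_\ep^i:=R^i(x,I_\ep^i)+\nu^j\exp((u_\ep^j-u_\ep^i)/\ep)-\nu^i$ are bounded uniformly in $\ep$. For the Lipschitz bound in (ii) I would run the Bernstein method on \ref{equ}, working with $\Phi:=\max(|\nabla u_\ep^1|^2,|\nabla u_\ep^2|^2)$. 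At an interior maximum $x_0$ of $\Phi$, say with $w^1:=|\nabla u_\ep^1|^2\ge w^2:=|\nabla u_\ep^2|^2$, the point $x_0$ is then a global maximum of $w^1$, so $\nabla w^1(x_0)=0$ and the dangerous drift term drops out. The gradient of the coupling term equals $\ep^{-1}\exp((u_\ep^2-u_\ep^1)/\ep)(\nabla u_\ep^2-\nabla u_\ep^1)$, and in the Bernstein identity it enters, up to the positive factor $2\nu^2\ep^{-1}\exp((u_\ep^2-u_\ep^1)/\ep)$, through $\nabla u_\ep^1\cdot(\nabla u_\ep^2-\nabla u_\ep^1)=\nabla u_\ep^1\cdot\nabla u_\ep^2-w^1\le\sqrt{w^1w^2}-w^1\le0$, since $w^2\le w^1$ at $x_0$; thus the singular $O(\ep^{-1})$ term has a favorable sign and may be discarded. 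One is left with $0\le-2\ep|D^2u_\ep^1|^2+2\nabla u_\ep^1\cdot\nabla_x R^1$, whence $\ep\,|D^2u_\ep^1|^2\le\sqrt{w^1}\,\|\nabla_x R^1\|_\infty$; combining with $|D^2u_\ep^1|^2\ge d^{-1}(\Delta u_\ep^1)^2$ and $\ep\Delta u_\ep^1=-(w^1+h_\ep^1)$ gives $(w^1+h_\ep^1)^2\le d\ep\sqrt{w^1}\,\|\nabla_x R^1\|_\infty$ at $x_0$, which forces $\max\Phi\le 2\|h_\ep^1\|_\infty\le C$ for $\ep$ small. This is the uniform Lipschitz bound.

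The lower bound in (ii) then follows by combining the mass estimate $\int_{B_L(0)}n_\ep^i\ge I_\ep^i/a_M\ge I_m/a_M>0$, which forces $\max u_\ep^i\ge\ep\log c_0$ at the maximum point of $n_\ep^i$ with $c_0=I_m/(a_M|B_L(0)|)>0$, together with the uniform oscillation control of (i) (or the Lipschitz bound), propagating this to all of $B_L(0)$. Finally (iii) is soft once (i) is available: if $u_\ep^i(x_0)>a$ somewhere, \ref{harnack} gives $u_\ep^i\ge a-D\ep$ on $B_\ep(x_0)\cap B_L(0)$, hence $n_\ep^i\ge e^{-D}e^{a/\ep}$ there and $\int_{B_L(0)}n_\ep^i\gtrsim\ep^{d}e^{a/\ep}\to\infty$, contradicting $\int_{B_L(0)}n_\ep^i\le I_M/a_m$ once $\ep$ is small. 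I expect the genuine obstacle to be the Bernstein step: the coupling term is truly singular as $\ep\to0$, and the argument only closes because (i) both bounds the exponential and, through the choice $\Phi=\max(w^1,w^2)$, lets the $O(\ep^{-1})$ gradient of the coupling enter with a sign one can throw away; making this rigorous up to the Neumann boundary (via reflection or a Hopf--type argument exploiting $\nabla u_\ep^i\cdot\vec n=0$ on $\partial B_L(0)$) is the delicate part, whereas (i) and (iii) are comparatively routine.
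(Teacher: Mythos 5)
Your proposal is correct and follows essentially the same route as the paper: the $\ep$-rescaling plus the Busca--Sirakov Harnack inequality for the cooperative system gives (i), the Bernstein method applied to $\max(|\nabla u_\ep^1|^2,|\nabla u_\ep^2|^2)$ with the favorable sign of the singular coupling term gives the Lipschitz bound, the mass lower bound $I_\ep^i\ge I_m$ propagated by the Lipschitz estimate gives the bound from below, and (iii) follows by contradiction with the upper bound on $I_\ep^i$. The only (harmless) cosmetic differences are that the paper closes the Bernstein inequality using only the positivity of the exponential coupling rather than its boundedness, and proves (iii) with the Lipschitz bound on a fixed-size neighborhood instead of the Harnack estimate on an $\ep$-ball.
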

\begin{proof}
(i) We define 
$$\widetilde n_\ep^i(y)=n_\ep^i(\ep y),\qquad\text{for $i=1,\,2$.}$$
From \ref{stat} we have
\begin{equation}\label{tilden}\displaystyle\left\{\begin{array}{rll}
- \Delta\widetilde n_\ep^1&=\widetilde n_\ep^1 R^1(\ep x,I_\ep^1)+\nu^2\widetilde n_\ep^2-
\nu^1 \widetilde n_\ep^1&\quad \text{in $B_{\frac L \ep}(0)$},\\\\
- \Delta\widetilde  n_\ep^2&=\widetilde n_\ep^2 R^2(\ep x,I_\ep^2)+\nu^1\widetilde  n_\ep^1-\nu^2 \widetilde n_\ep^2&\quad \text{in $B_{\frac L\ep}(0)$},
\end{array}\right.
\end{equation}
Moreover, from \ref{as:quad1}, \ref{as:quad2} and \ref{boundI} we have, for $\ep\leq \ep_0$,
$$\delta-C(I_M-I_m)\leq R(\ep x,I_\ep)\leq -\delta+C(I_M-I_m).$$
Therefore the coefficients of the linear elliptic system \ref{tilden} are bounded uniformly in $\ep$. It follows from the classical Harnack inequality (\cite{JB.MS:04}, Theorem 8.2) that there exists a constant $D=D(C,I_m,I_M,\delta,\nu^1,\nu^2)$ such that for all $y_0\in B_{\frac L\ep}(0)$ such that $B_1(y_0)\subset B_{\frac L \ep}(0)$ and for $i,j=1,2$,
$$
\sup_{z\in B_1(y_0)}\widetilde n_\ep^i(z)\leq D\,\inf_{z\in B_1(y_0)}\widetilde n_\ep^j(z).$$
Rewriting the latter in terms of $ n_\ep^1$ and $ n_\ep^2$ and replacing $(y_0,z)$ by $(\frac{x}{\ep},\frac{z'}{\ep})$ we obtain
$$
\sup_{z'\in B_\ep(x)}n_\ep^i(z')\leq D\,\inf_{z'\in B_\ep(y_0)}n_\ep^j(z'),$$
and  thus from \ref{Hopf} we deduce \ref{harnack}.\\

(ii) To prove the Lipschitz bounds, we use the Bernstein method (see \cite{C.I.L:92}). We assume that
\begin{equation} \label{max}\max_{x\in B_L(0)}(|\nabla u_\ep^1(x)|,|\nabla u_\ep^2(x)|)=|\nabla u_\ep^1(x_\ep)|,\end{equation}
that is the maximum is achieved at a point $x_\ep\in B_L(0)$ and for $i=1$ (the case where the maximum is achieved for $i=2$ can be treated by similar arguments). From the Neumann boundary condition in \ref{equ} we know that $x_\ep$ is an interior point of $B_L(0)$. We define $p=|\nabla u_\ep^1|^2$ and notice that
%$$\nabla p=2\mathrm{Hess}\; u_\ep^1\;\nabla u_\ep^1,$$
$$\Delta p=2 \mathrm{Tr}\;(\mathrm{Hess}\;u_\ep^1)^2+2\nabla (\Delta u_\ep^1)\cdot \nabla u_\ep^1.$$
We now differentiate the first equation in \ref{equ} with respect to $x$ and multiply it by $\nabla u_\ep^1$ to obtain
$$-\ep\nabla(\Delta u_\ep^1)\cdot \nabla u_\ep^1=\nabla p\cdot \nabla u_\ep^1+\nabla R^1\cdot \nabla u_\ep^1+\nu^2\left(\frac{\nabla u_\ep^2-\nabla u_\ep^1}{\ep}\right)\cdot\nabla u_\ep^1 \exp(\frac{u_\ep^2-u_\ep^1}{\ep}).$$
From \ref{max} we have
$$\left(\nabla u_\ep^2(x_\ep)-\nabla u_\ep^1(x_\ep)\right)\cdot\nabla u_\ep^1(x_\ep)\leq 0,$$
and thus
$$-\frac\ep2\Delta p(x_\ep)+\ep \mathrm{Tr}\;(\mathrm{Hess}\;u_\ep^1(x_\ep))^2\leq \nabla p(x_\ep)\cdot \nabla u_\ep^1(x_\ep)+\nabla R^1(x_\ep)\cdot \nabla u_\ep^1(x_\ep).$$
Moreover from \ref{max} we have $\nabla p(x_\ep)=0$ and $\Delta p\leq 0$. It follows that
$$\ep \left(\Delta u_\ep^1(x_\ep)\right)^2\leq \ep d\, \mathrm{Tr}\;(\mathrm{Hess}\;u_\ep^1(x_\ep))^2\leq d\nabla R^1(x_\ep)\cdot \nabla u_\ep^1(x_\ep).$$
Using again \ref{equ} we obtain
$$\left( |\nabla u_\ep^1|^2+R^1(x_\ep,I_\ep^1)+\nu^2\exp\left(\frac{u_\ep^2-u_\ep^1}{\ep}\right)-\nu^1 \right)^2\leq  \ep d\,\nabla R^1(x_\ep,I_\ep^1)\cdot \nabla u_\ep^1(x_\ep).$$
From \ref{as:quad1}, \ref{as:quad2} and \ref{boundI} we find that $(R^1(x,I_\ep^1))_\ep$ is uniformly bounded for $\ep\leq \ep_0$. We conclude that $(u_\ep^1)_\ep$ is uniformly Lipschitz  for $\ep\leq \ep_0$.\\

To prove uniform bounds from below, we notice from \ref{as:psi} and \ref{boundI} that, for $i=1,2$, there exists a point $\overline x_i\in B_L(0)$ such that
$$\ep \ln\left(\frac{I_m}{a_M|B_L(0)|}\right)\leq u_\ep^i(\overline x_i).$$
From the latter and the Lipschitz bounds we obtain that
$$-2LC_1+\ep \ln\left(\frac{I_m}{a_M|B_L(0)|}\right)\leq u_\ep^i,\qquad \text{in $B_L(0)$ and for $i=1,2$}.$$
It follows that the families $(u_\ep^i)_\ep$ are bounded from below for $\ep\leq \ep_0$ and $i=1,2$.

(iii) We prove \ref{lbound} for $i=1$ by contradiction. The proof for $i=2$ follows the same arguments. We assume that there exists a sequence $(\ep_k,x_k)$ such that $\ep_k\to 0$ as $k\to \infty$, $x_k\in B_L(0)$ and $u_{\ep_k}^1(x_k)>a$. Using the uniform Lipschitz bounds obtained in (ii) we have
$$n_{\ep_k}^1(x)>\exp\left(\frac{a}{2\ep_k}\right),\qquad \text{in }[x_k-\frac{a}{2C_1},x_k+\frac{a}{2C_1}]\cap B_L(0).$$
This is in contradiction with the bound from above in \ref{boundI}, for $\ep_k$ small enough. Therefore \ref{lbound} holds.
\end{proof}

\section{Convergence to the Hamilton-Jacobi equation}\label{sec:HJ}

In this section we prove Theorem \ref{thm:main}. \\

\begin{proof}
\textbf{Convergence to the Hamilton-Jacobi equation: }
From (ii) and (iii) in Theorem \ref{thm:reg} we have that for $i=1,2$, the families $(u_\ep^i)_\ep$ are uniformly bounded and Lipschitz. Therefore, from the Arzela-Ascoli Theorem we obtain that, along subsequences, $(u_\ep^1)_\ep$ and $(u_\ep^2)_\ep$ converge locally uniformly to some continuous functions $u^i\in \mathrm{C}(B_L(0); \mathbb{R})$, with $i=1,2$. Moreover, from (i) in Theorem \ref{thm:reg} we deduce that $u^1=u^2$. Here we consider a subsequence of $(I_\ep^1,I_\ep^2,u_\ep^1,u_\ep^2)_\ep$ that converges to $(I^1,I^2,u,u)$.

Let $H(x,I_\ep^1,I_\ep^2)$, be the largest eigenvalue of the matrix
$$\mathcal{A}_\ep=\left(\begin{array}{cc} R^1(x,I_\ep^1)-\nu^1&\nu^2\\
\nu^1 & R^2(x,I_\ep^2)-\nu^2\end{array}\right),$$
and $\left(\begin{array}{c}\chi_\ep^1(x)\\\chi_\ep^2(x)\end{array}
\right)$ be the corresponding eigenvector. Since the non-diagonal terms in $\mathcal{A}_\ep$ are strictly positive, using the Perron-Frobinius Theorem, we know that such eigenvalue exist and that $\chi_\ep^1$ and $\chi_\ep^2$ are strictly positive. We write
$$\phi_\ep^i(x)=\ln \chi_\ep^i(x),\qquad \text{for $i=1,2$.}$$
 We prove that $u$ is a viscosity solution of 
$$ - |\nabla u | ^2 = H(x,I^1,I^2), \qquad  \text{in $B_L(0)$}. $$
To this aim, suppose that $u-\varphi$ has a maximum in $x\in B_L(0)$. Then, we consider a sequence $x_\ep\in B_L(0)$, such that as $\ep\to 0$, $x_\ep\to x$ and 
$$u_\ep^1(x_\ep)-\varphi(x_\ep)-\ep\phi_\ep^1(x_\ep)=\max_{\underset{i=1,2}{x\in B_L(0)}}  u_\ep^i(x)-\varphi(x)-\ep\phi_\ep^i(x)$$
is attained at the point $x_\ep$ and for $i=1$ (The case with $i=2$ can be treated similarly). In this case, we have in particular that
$$u_\ep^2(x_\ep)-u_\ep^1(x_\ep)\leq \ep\left(\phi_\ep^2(x_\ep)-\phi_\ep^1(x_\ep)\right).$$
Using the latter and the viscosity criterion for the first equation in \ref{equ} we obtain that
\begin{equation}\label{ine}\begin{split}
-\ep (\Delta \varphi(x_\ep)+\ep\Delta \phi_\ep^1(x_\ep))-|\nabla \varphi(x_\ep)+\ep\nabla \phi_\ep^1(x_\ep)|^2- R^1(x,I_\ep^1)&\\
-\nu^2 \exp\left(\phi_\ep^2(x_\ep)-\phi_\ep^1(x_\ep)\right)+\nu^1&\leq 0.
\end{split}
\end{equation}
We notice that, by definition of $\phi_\ep^1$ and $\phi_\ep^2$, we have
$$- R^1(x,I_\ep^1)-\nu^2 \exp\left(\phi_\ep^2(x_\ep)-\phi_\ep^1(x_\ep)\right)+\nu^1=-H(x,I_\ep^1,I_\ep^2).$$
From the latter and by letting $\ep\to 0$ in \ref{ine} we deduce that
$$
-|\nabla \varphi(x)|^2\leq H(x,I^1,I^2),$$
and thus $u$ is a subsolution of \ref{HJ} in the viscosity sense. The supersolution criterion can be proved in a similar way.\\

\textbf{The constraint on the limit ($\max_{x\in B_L(0)}u(x)=0$):} From \ref{lbound} we obtain that $u(x)\leq 0$. To prove that $0\leq\max_{x\in B_L(0)}u(x)$, we use the lower bounds on $I_\ep^i$ in \ref{boundI}.  The proof of this property is classical and we refer to \cite{GB.SM.BP:09,AL.SM.BP:10} for a detailed proof.
\end{proof}

\section{Asymptotic behavior of stationary solutions}\label{sec:as}

In this section we prove Theorem \ref{thm:as}.

\begin{proof}
\textbf{Support of $n^i$:} From \ref{boundI}, we deduce that, along subsequences and for $i=1,2$, $(n_\ep^i)_\ep$ converges weakly to a measure $n^i$. The fact that $\mathrm{supp } \;n^i\subset \Omega,\text{ for $i=1,2$,}$ is a consequence of the Hopf-Cole transformation \ref{Hopf}. To prove \ref{eq:supp} it is enough to prove $\Omega \subset \Gamma$. To this aim following the idea in \cite{GB.BP:08} we first prove that, for $i=1,2$, $u_\ep^i$ are uniformly semi-convex. Recall that the smooth function $v$ is semiconvex with constant $C$, if we have
$$v_{\xi\xi}\geq -C,\qquad \text{for all $|\xi|=1$}.$$
Let 
\begin{equation}\label{min}\min \{u_{\ep,\xi\xi}^i(x)\,|\, x\in B_L(0),\, i=1,2,\, \xi\in \mathbb{R}^d, \, |\xi|=1\}=u_{\ep,\eta\eta}^1(x_\ep).\end{equation}
The case where the minimum is achieved for $i=2$ can be treated similarly. We differentiate twice the first equation in \ref{equ} with respect  to $\eta$ and obtain
\begin{equation*}\begin{split}
-\ep \Delta u_{\ep,\eta\eta}^1&=2\nabla u_\ep^1\cdot \nabla u_{\ep,\eta\eta}^1+2|\nabla u_{\ep,\eta}^1|^2+R_{\eta\eta}^1
\\
&+\nu^2\left(\left(\frac{u_{\ep,\eta}^2-u_{\ep,\eta}^1}{\ep}\right)^2+\frac{u_{\ep,\eta\eta}^2-u_{\ep,\eta\eta}^1}{\ep}\right)\exp\left(\frac{u_\ep^2-u_\ep^1}{\ep}\right).
\end{split}
\end{equation*}
From \ref{min} we obtain that  $\Delta u_{\ep,\eta\eta}^1(x_\ep)\geq 0$, $\nabla u_{\ep,\eta\eta}^1(x_\ep)=0$ and $u_{\ep,\eta\eta}^2(x_\ep)-u_{\ep,\eta\eta}^1(x_\ep)\geq 0$. Using \ref{as:R2} It follows that
$$|\nabla u_{\ep,\eta}^1(x_\ep)|^2\leq \frac D 2.$$
Since $u_{\ep,\eta\eta}^1=\nabla u_{\ep,\eta}^1\cdot \eta$, we have $|u_{\ep,\eta\eta}^1|\leq |\nabla u_{\ep,\eta}^1|$. We deduce that
$$|u_{\ep,\eta\eta}^1(x_\ep)|^2\leq \frac D 2,$$
and thus
$$\min \{u_{\ep,\xi\xi}^i(x)\,|\, x\in B_L(0),\, i=1,2,\, \xi\in \mathbb{R}^d, \, |\xi|=1\}\geq -\sqrt{ \frac D 2}.$$
This proves that $u_\ep^i$, for $i=1,2$ are semiconvex functions with constant $-\sqrt{ \frac D 2}$. By passing to the limit in $\ep\to 0$ we obtain that $u$ is also semiconvex with the same constant. \\

A semiconvex function is differentiable at its maximum points. Therefore $u$ is differentiable with $\nabla u=0$ in the set $\Omega$. From \ref{HJ}, we deduce, that for all $x\in \Omega$, $H(x,I^1,I^2)=0$, and thus $\Omega \subset \{x\in B_L(0)\,|\,H(x,I^1,I^2)=0\}$. The fact that $\max_{x\in B_L(0)}H(x,I^1,I^2)=0$ is immediate from \ref{HJ} and the facts that $u$ is almost everywhere differentiable and $H(x,I^1,I^2)$ is a continuous function.  \\

\textbf{Value of $n^i$ on the support: }  Let $\xi\in \mathrm{C}^\infty_\mathrm{c}(B_L(0))$, i.e. $\xi$ is a smooth function with compact support in $B_L(0)$. We multiply \ref{stat} by $\xi$ and integrate with respect to $x$ in $B_L(0)$ to obtain, for $ \{i,j\}=\{1,2\}$,
\begin{equation*}\begin{split}- \ep^2 \int_{B_L(0)} n_\ep^i(x) \Delta \xi (x) dx   &=\int_{B_L(0)} \xi(x) n_\ep^i(x) R^i(x,I_\ep^i) dx\\
& -\nu^i \int_{B_L(0)} \xi(x) n_\ep^i(x)dx +\nu^j \int_{B_L(0)} \xi(x)n_\ep^j(x)  dx. \end{split}
\end{equation*}
Since  $n_\ep^l\xrightharpoonup{\quad} n^l$ weakly and $I_\ep^l\to I^l$, for $l=1,2$, as $\ep\to 0$, we obtain that, for $ \{i,j\}=\{1,2\}$,
$$\int_{B_L(0)} \xi(x) n^i(x) R^i(x,I^i) dx -\nu^i \int_{B_L(0)} \xi(x) n^i(x)dx +\nu^j \int_{B_L(0)} \xi(x)n^j(x)  dx=0,$$
and thus \ref{dist}.  Finally, \ref{eq:I1I2first} follows from  \ref{Iei}.
\end{proof}

\section{Examples of application}\label{sec:ex}

In \ref{eq:ni}--\ref{eq:I1I2} we give a description of  $(n^1,n^2)$, assuming that the support of $n^i$, for $i=1,2$, is  a set of distinct points, i.e. $n^i$ is a  sum of Dirac masses and does not have a continuous distribution. This is what we expect naturally in the models based on darwinian evolution. More precisely, from Volterra-GauseÕs competitive exclusion principle (see \cite{Levin_70,Schoener:74}) it is known in theoretical biology that in a model with $K$ limiting factors (as nutrients or geographic parameters) at most $K$ distinct species can generally survive. Here we have two limiting factors, represented by $I^1$ and $I^2$, that correspond to the environmental pressures in the two patches. We thus expect to observe only monomorphic or dimorphic situations. This is also the case in the numerical simulations represented in Section \ref{sec:num}.

From \ref{eq:supp} we know that the support of $n^i$ is included in the set of maximum points of $H(x,I^1,I^2)$, $\Gamma$, with $(I^1,I^2)$ limits of $(I_\ep^1,I_\ep^2)$. If now $H$ is such that, for fixed $(I^1,I^2)$, the corresponding set $\Gamma$ consists of isolated points, it follows that the supports of $n^1$ and $n^2$ consist also of isolated points. We give an example below where $H$ has clearly this property.\\

\begin{example} \label{ex1}(\textbf{monomorphism towards dimorphism})
Consider a case with the following values for the parameters of the system
\begin{equation}\label{quad}R^1(x,I)=a^1x^2+b^1x+c^1-d^1I,\qquad R^2(x,I)=a^2x^2+b^2x+c^2-d^2I,\end{equation}
with 
$$a^i,b^i,c^i,d^i\in \mathbb{R}, \qquad a^i<0<d^i, \qquad \text{for $i=1,2$.}$$
Then the supports of $n^1$ and $n^2$ consist at most of two single points.
\end{example}

We first notice that in the case where there is no migration between patches ($\nu^1=\nu^2=0$), from the results in \cite{AL.SM.BP:10}, we know that in patch $i$, the population concentrates in large time on the maximum points of $R^i(\cdot,I^i)$ with $I^i$ the limit of $I_\ep^i$. Since $R^i$ is a quadratic function in $x$, it has a unique maximum and thus  $n^i$ is a single Dirac mass on this maximum point. However, allowing migration by  taking positive values for $\nu^1$ and $\nu^2$ the population can become dimorphic. In Section \ref{sec:num} we give a numerical example where a dimorphic situation appears (see Figure \ref{fig:dim}). This is in accordance with the competitive exclusion principle since we have introduced a new limiting factor, which is the choice of habitable zones. 

Next, we prove the result:\\

\begin{proof}[Proof of Example \ref{ex1}. ] From \ref{eq:supp} we have that the stationary solutions concentrate asymptotically on the maximum points of $H$ defined as below
$$\begin{array}{rl}H(x,I^1,I^2)&=\frac 12F+\frac 12\sqrt{F^2-4G},\end{array}$$
with 
\begin{equation}
\label{def:G}
\begin{array}{c}F(x,I^1,I^2):=R^1(x,I^1)-\nu^1+R^2(x,I^2)-\nu^2,\\ G(x,I^1,I^2):=(R^1(x,I^1)-\nu^1)(R^2(x,I^2)-\nu^2)-\nu^1\nu^2,
\end{array}\end{equation}
Since $\max_{x\in B_L(0)}H(x,I^1,I^2)=0$, we deduce that
\begin{equation}
\label{G:min}\min_{x\in B_L(0)}G(x,I^1,I^2)=0,\end{equation}
and
\begin{equation}\label{HG}\Gamma=\{x\in B_L(0)\,|\, H(x,I^1,I^2)=0\}=\{x\in B_L(0)\,|\, G(x,I^1,I^2)=0\}.\end{equation}
For fixed $(I^1,I^2)$, $G(x,I^1,I^2)$ is a polynomial of order $4$. Therefore it has at most two maximum points. It follows that $\Gamma$ consists of one or two distinct points. 
%
%In this particular example, we were able to explain theoretically why the stationary solutions concentrate asymptotically on one or two  Dirac masses. This property  however holds true in numerical simulations for more general growth terms. We can give formally an explanation to this fact: Let $\{ x_1,\cdots,x_N\} \subset\mathrm{supp} \,n^1\cap \mathrm{supp} \,n^2$. Then one should have
%\begin{equation}\label{formal}H(x_j,I^1,I^2)=0,\qquad \nabla H(x_j,I^1,I^2)=0,\qquad \text{for }j=1,\cdots, N.\end{equation}
%Therefore we have $N(1+d)$ equations as constraints, but we only have $2+Nd$ variables  that are $(I^1,I^2,x_1,\cdots,x_N)$. This implies that, for $N>2$, \ref{formal} does not generically admit a solution.\\
\end{proof}
\vspace{2mm}

\begin{example}\label{ex2}(\textbf{An asymmetric case}) We assume that the parameters are such that the support of $n^i$, for $i=1,2$, consists of isolated points, and we have
\begin{equation}\label{as:sym}\begin{split}
R^i(x,I)=R^i(x)-cI,\quad \text{for $i=1,2$,} \quad R^1(x)=R^2(\tau(x)),\quad \text{for all $x\in B_L(0)$}\\
\text{ and }\quad \nu^1=\nu^2=\nu,
\end{split}
\end{equation}
with $\tau:B_L(0)\to B_L(0)$ such that $\tau \circ  \tau=\rm{Id}$.
Let  $(I^1,I^2)$ be a limit point of $(I_\ep^1,I_\ep^2)$.  We have $I^1=I^2=I$, where $I$ is such that
$$\max_x H(x,I,I)=\min_x G(x,I,I)=0,$$
with $H$and $G$ defined respectively in \ref{def:H} and \ref{def:G}.
 In particular, if $\bar x\in \Gamma$ then we have $\tau(\bar x)\in \Gamma$, with $\Gamma$ defined in \ref{Gamma}.
\end{example}

Assumption \ref{as:sym} covers the case where the growth terms have the following forms
$$R^1(x)=f(|x-a|),\qquad R^2(x)=f(|x+a|),$$ 
with $f:\rm{B_L(0)}\to \mathbb{R}$ a function and $a$ constant (we consider the application $\tau(x)=-x$).  In this case the competition terms in the patches have a simple form: the fitness, in absence of migration, has a shift in traits from one zone to another, for instance due to a difference in the temperature. We can thus characterize the limit in this case. If moreover, we suppose that the growth terms satisfy \ref{quad}, we conclude that in the limit while $\ep\to 0$, the population, is either monomorphic with a single Dirac mass at the origin, or it is dimorphic with two Dirac masses located on two symmetric points, one of the winning traits being more favorable for zone $1$ and the other one being more favorable for zone $2$.
\\

\begin{proof}[Proof of Example \ref{ex2}.]
We prove the claim by contradiction and we assume that $I^1\neq I^2$. Without loss of generality we suppose that $I^1< I^2$. Let $\bar x_j\in \rm{supp}\;n^1$.  From \ref{eq:supp} and \ref{HG}, we have that $G$ has a minimum in $\bar x_j$ and in particular, $G(\bar x_j)\leq G(\tau(\bar x_j))$, namely,
$$(R^1(\bar x_j)-I^1-\nu)(R^2(\bar x_j)-I^2-\nu)\leq (R^1(\tau(\bar x_j))-I^1-\nu)(R^2(\tau(\bar x_j))-I^2-\nu).$$
It follows that
$$(R^1(\bar x_j)-I^1-\nu)(R^2(\bar x_j)-I^2-\nu)\leq (R^2(\bar x_j)-I^1-\nu)(R^1(\bar x_j)-I^2-\nu).$$
We deduce that
$$0\leq (I^1-I^2)\left(R^2(\bar x_j)-R^1(\bar x_j)\right), $$
and thus 
$$R^2(\bar x_j)\leq R^1(\bar x_j).$$
From the latter, $I^1<I^2$ and \ref{eq:rho} we obtain that
$$\rho_j^2<\rho_j^1. $$
Since this is true for all $\bar x_j\in \rm{supp}\; n^1= \rm{supp}\; n^2$, we obtain from \ref{eq:I1I2} that $I^2<I^1$. This is a contradiction and thus $I^1=I^2$.
\end{proof}
\section{The case with several patches}\label{sec:ext}

The result can be extended to the case with more than two patches. The model for $K$ patches is written as
\begin{equation}\label{multistat}\left\{\begin{array}{rll}
-\ep^2 \Delta n_\ep^i&=n_\ep^i R^i(x,I_\ep^i)+\sum_j \nu^{ij} n_\ep^j-\nu^{ii} n_\ep^i&\quad \text{in $B_L(0)$ and for $1\leq i \leq K$},\\\\
\nabla n_\ep^i\cdot \vec{n}&=0&\quad \text{in $\partial B_L(0)$ and for $1\leq i \leq K$},
\end{array}\right.
\end{equation}
with 
\begin{equation}\label{multiIei}
I_\ep^i=\int \psi^i(x)n_\ep^i(x)dx,\qquad \text{for $i\in \{1, \cdots ,K\}$}.
\end{equation}
We suppose that $(n_\ep^1,\cdots, n_\ep^K)$ is a solution of \ref{multistat}--\ref{multiIei} such that
\begin{equation}\label{as:multi}
\min(I_\ep^1,\cdots, I_\ep^K)\leq I_M,\quad I_m\leq \max(I_\ep^1,\cdots,I_\ep^K),\quad (I_\ep^1,\cdots,I_\ep^K)\underset{\ep\to 0}{\longrightarrow}(I^1,\cdots,I^K).
\end{equation}
We also replace assumption \ref{as:R2} by
\begin{equation}\label{as:multiR}\begin{array}{c}
|R^i(x,I)|\leq C,\quad -D\leq R_{\xi\xi}^i(x,I), \\
\text{for $x\in B_L(0)$, $0\leq I$, }  \text{$\xi\in \mathbb{R}^d$, $|\xi|=1$ and $1\leq i\leq K$,}
\end{array}
\end{equation}
and we use again the Hopf-Cole transformation
$$n_\ep^i=\exp(\frac{u_\ep^i}{\ep}),\qquad \text{for $i=1,\cdots,K$.}$$
% \label{def:Hmulti}
To present the result we also introduce the following matrix
$$\mathcal{B}=\left(\begin{array}{ccc} R^1(x,I^1)-\nu^{11}&\cdots&\nu^{1K}\\
\vdots&\ddots&\vdots\\
\nu^{K1} &\cdots &R^K(x,I^K)-\nu^{KK}\end{array}\right),$$
and as in the case with two patches we define
$$%\label{Gammamulti}
\Omega=\{x\in B_L(0) \,| \, u(x)=0\},$$
and
$$
\Gamma=\{x\in  B_L(0)\,| \, H(x,I^1,\cdots,I^K)=\max_{x\in B_L(0)}H(x,I^1,\cdots,I^K)=0\}.
$$

We have
\begin{theorem}\label{thm:multimain}
We assume that  $(n_\ep^1,\cdots, n_\ep^K)$ is a solution of \ref{multistat}--\ref{multiIei} with \ref{as:psi}, \ref{as:multi} and \ref{as:multiR}. Then, after extraction of a subsequence, the sequences $(u_\ep^i)_\ep$, for $i=1,\cdots,K$, converge to a continuous function $u\in \mathrm{C}(B_L(0))$ that is a viscosity solution to the following equation
$$%\label{HJmulti}
\left\{ \begin{array}{ll}-|\nabla u|^2= H(x,I^1,\cdots,I^K),&\quad \text{in $B_L(0)$},\\\\
\max_{x\in B_L(0)}u(x)=0,\end{array}\right.$$
with 
$H(x,I^1,I^2)$ the largest eigenvalue of the matrix $\mathcal{B}$. 
Let $n^i$, for $i=1,\cdots,K$, be a weak limit of $n_\ep^i$ along this subsequence.  We have
$$% \label{eq:suppmulti}
\mathrm{supp } \;n^i\subset \Omega\cap \Gamma,\quad \text{for $i=1,\cdots,K$}$$
Moreover, if the support of $n^i$, for $i=1,\cdots,K$, is a set of distinct points: $\mathrm{supp}\,n^i\subset \{x_1,x_2,\cdots,x_l\}$, we then have
$$%\label{eq:nimulti}
n^i=\sum_{j=1}^l\rho^i_j \delta(x-x_j),\qquad \text{for $i=1,\cdots,K$,}$$
with $\left(\begin{array}{c}\rho^{1}_j\\\vdots\\ \rho^K_j\end{array}\right)$ the eigenvector corresponding to the largest eigenvalue of $\mathcal{B}$ at the point $x_j$, which is $0$, 
coupled by
$$\sum_j \rho_j^i\psi^i(x_j)=I^i.$$

%= \left( R^1(x,I^1)+ R^2(x,I^2)-\nu^1-\nu^2\right)+\sqrt{\left( R^1(x,I^1)-\nu^1\right) \left( R^2(x,I^2)-\nu^1\right)-\nu^1\nu^2 } .$$
\end{theorem}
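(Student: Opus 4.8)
The plan is to reproduce, stage by stage, the three-part argument already carried out for two patches (Theorems \ref{thm:reg}, \ref{thm:main} and \ref{thm:as}), checking that each structural feature exploited for $K=2$ survives for general $K$. The three stages are: (a) uniform regularity estimates on the $u_\ep^i$ (Harnack coupling, Bernstein Lipschitz bound, two-sided bounds); (b) passage to a single Hamilton-Jacobi equation via a Perron-Frobenius eigenvector together with the perturbed test function method; and (c) the semiconvexity argument locating $\mathrm{supp}\,n^i$ inside $\Omega\cap\Gamma$, followed by the distributional identity $\mathcal{B}\,(n^1,\dots,n^K)^{\mathrm T}=0$ and its consequences on a discrete support. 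Throughout, the hypotheses \ref{as:multi} and \ref{as:multiR} replace the role played by \ref{as:quad1}--\ref{as:R2}: the $I_\ep^i$ are no longer proved bounded but assumed convergent (hence bounded), and $|R^i|\le C$ supplies the coefficient bounds that \ref{as:quad1}--\ref{as:quad2} gave before.

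For stage (a) I would rescale $\widetilde n_\ep^i(y)=n_\ep^i(\ep y)$, turning \ref{multistat} into a linear elliptic system whose coefficients are bounded uniformly in $\ep$ (using $|R^i|\le C$ and the boundedness of the $I_\ep^i$) and whose off-diagonal coupling coefficients $\nu^{ij}$ are nonnegative. The Harnack inequality for fully coupled cooperative systems (\cite{JB.MS:04}) then yields $\sup_{B_1}\widetilde n_\ep^i\le D\inf_{B_1}\widetilde n_\ep^j$ for every pair $i,j$, hence the analogue of \ref{harnack}. The Bernstein step goes through verbatim once one sets $p=|\nabla u_\ep^{i_0}|^2$ at a point $x_\ep$ and index $i_0$ realizing $\max_{i}|\nabla u_\ep^i|$: differentiating the $i_0$-th equation produces coupling terms $\nu^{i_0 j}\,\ep^{-1}(\nabla u_\ep^j-\nabla u_\ep^{i_0})\cdot\nabla u_\ep^{i_0}\,\exp(\cdots)$, each $\le 0$ by maximality of $|\nabla u_\ep^{i_0}|$, so the favorable sign that killed the coupling for $K=2$ persists term by term. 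The lower bound on $u_\ep^i$ uses $I_m\le\max_i I_\ep^i$ at one patch, transported to all patches by Harnack and then to all of $B_L(0)$ by the Lipschitz bound; the upper bound $u_\ep^i\le a$ is by contradiction, using that each $I_\ep^i$ is bounded.

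For stage (b), Arzelà--Ascoli gives subsequential uniform limits $u^i$, and the Harnack estimate $|u_\ep^i-u_\ep^j|\le D\ep$ forces a single common limit $u$. Applying Perron--Frobenius to $\mathcal{B}$ (nonnegative off-diagonal entries) produces the largest eigenvalue $H(x,I^1,\dots,I^K)$ with a strictly positive eigenvector $(\chi_\ep^i)_i$; writing $\phi_\ep^i=\ln\chi_\ep^i$, the eigenvector equation reads $R^{i}-\nu^{ii}+\sum_{j\neq i}\nu^{ij}\exp(\phi_\ep^j-\phi_\ep^i)=H$ for each $i$. Testing $u$ from above by $\varphi$, choosing $x_\ep\to x$ and the index $i_0$ maximizing $u_\ep^i-\varphi-\ep\phi_\ep^i$, maximality gives $u_\ep^j(x_\ep)-u_\ep^{i_0}(x_\ep)\le\ep(\phi_\ep^j(x_\ep)-\phi_\ep^{i_0}(x_\ep))$; substituting this and the eigenvector identity into the viscosity inequality for the $i_0$-th equation and letting $\ep\to0$ yields $-|\nabla\varphi(x)|^2\le H(x,I^1,\dots,I^K)$, and the supersolution inequality is symmetric. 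The constraint $\max u=0$ follows from $u\le 0$ and $\max u\ge0$ exactly as in \cite{GB.SM.BP:09,AL.SM.BP:10}, the latter using $I_m\le\max_i I_\ep^i$.

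Stage (c) follows \cite{GB.BP:08}: differentiating the $i_0$-th equation twice in the direction $\eta$ minimizing $u_{\ep,\xi\xi}^i$ over $i,\xi$, the coupling terms are nonnegative there (their quadratic part is a square and $u_{\ep,\eta\eta}^j-u_{\ep,\eta\eta}^{i_0}\ge0$), and $R^{i}_{\eta\eta}\ge -D$ from \ref{as:multiR} gives the uniform semiconvexity bound $\sqrt{D/2}$; passing to the limit, $u$ is semiconvex, hence differentiable with $\nabla u=0$ on $\Omega$, so $H=0$ there and $\Omega\subset\Gamma$, while $\mathrm{supp}\,n^i\subset\Omega$ comes from the Hopf--Cole transformation. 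Multiplying \ref{multistat} by a test function and passing to the limit via weak convergence and $I_\ep^i\to I^i$ gives $\mathcal{B}\,(n^1,\dots,n^K)^{\mathrm T}=0$ distributionally; on a discrete support this forces the Dirac decomposition with $(\rho_j^i)_i$ in $\ker\mathcal{B}(x_j)$, which at $x_j\in\Gamma$ is the eigenvector for the zero (= largest) eigenvalue, and \ref{multiIei} supplies the normalization. The main obstacle I anticipate is precisely the coupling used twice above: both the Harnack inequality (to obtain a single limit $u$) and Perron--Frobenius (to obtain a strictly positive eigenvector) require the migration matrix $(\nu^{ij})$ to be irreducible, i.e. the patch graph to be connected. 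I would state irreducibility as the operative structural hypothesis; should it fail, the system decouples into blocks and the theorem would have to be applied blockwise.
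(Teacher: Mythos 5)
Your proposal is correct and takes essentially the same approach as the paper: the paper's own proof of Theorem \ref{thm:multimain} is exactly a one-paragraph instruction to rerun the three-stage argument of Theorems \ref{thm:main} and \ref{thm:as}, the only new point being that the bounds on $u_\ep^i$ are recovered from \ref{as:multi} by treating first the index realizing the min (resp.\ max) of the $I_\ep^i$ and then propagating to the other patches via the Harnack estimate \ref{harnack}, which is what you do (your per-patch contradiction argument for the upper bound, using that each $I_\ep^i$ is bounded thanks to the assumed convergence, is a harmless shortcut of the paper's min-index-plus-Harnack route). Your closing remark that irreducibility of $(\nu^{ij})$ is needed both for the Busca--Sirakov Harnack inequality and for the Perron--Frobenius step is a genuine hypothesis the paper leaves implicit (it is automatic for $K=2$ with $\nu^1,\nu^2>0$), and flagging it as the operative structural assumption is the right call.
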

\begin{proof}
The proof of Theorem \ref{thm:multimain} follows along the same lines as the one of Theorem \ref{thm:main} and Theorem \ref{thm:as}. The only difference is in the proof of lower and upper bounds on $u_\ep$ which are obtained using the uniform bounds on $I_\ep^i$. Indeed Assumption \ref{as:multi} is slightly weaker than \ref{boundI}. To prove uniform bounds on $u_\ep^i$, with $i=1,\cdots,K$, using \ref{as:multi} we first prove that for an index $j\in \{1,\cdots,K\}$ which is such that the minimum (respectively the maximum) of $(I^1,\cdots,I^K)$ is attained for  $I^j$, $u_\ep^j$ is uniformly bounded from above (respectively from below), then we use an estimate of type \ref{harnack} to obtain a uniform bound from above (respectively from below) on $u_\ep^i$ for all $i\in\{1,\cdots,K\}$.
\end{proof}

\section{Time dependent problem and numerics}\label{sec:num}

How well the asymptotics of the solutions of \ref{stat} (that are stationary solutions of \ref{main}) approximate the large time behavior of the solution of the time-dependent problem \ref{main}, while $\ep$ vanishes ? In this section, using numerical simulations we try to answer to this question. Theoretical study of the time-dependent problem, which requires appropriate regularity estimates, is beyond the scope of the present paper and is left for future work.

The numerical simulations for \ref{main} have been performed in Matlab using the following parameters
\begin{equation}\label{data1}
\begin{array}{c}R^1(x,I)=3-(x+1)^2-I,\quad R^2(x,I)=3-(x-1)^2-I,\quad\psi^1(x)=\psi^2(x)=1,\\\ \nu^1=\nu^2=2.5,\quad {\ep=.001},\quad L=2.
\end{array}
\end{equation}
We notice that these parameters verify the properties in both examples \ref{ex1} and \ref{ex2}. Therefore, we expect that the stationary solutions are concentrated on one or two Dirac masses that are symmetric with respect to the origin. As we observe in Figure \ref{fig:mon},  $n_\ep^1$ and $n_\ep^2$,  with $(n_\ep^1,n_\ep^2)$ solution of the time-dependent problem \ref{main}  with the above parameters, concentrate in large time on a single Dirac mass at the origin, which is the mean value of the favorable traits in each zone in absence of migration. In this simulation, initially  $n_\ep^1$ is concentrated on $x=-0.3$ and $n_\ep^2$ is concentrated on $x=0.3$.
\begin{figure}[h]
\includegraphics[angle=0,scale=0.4]{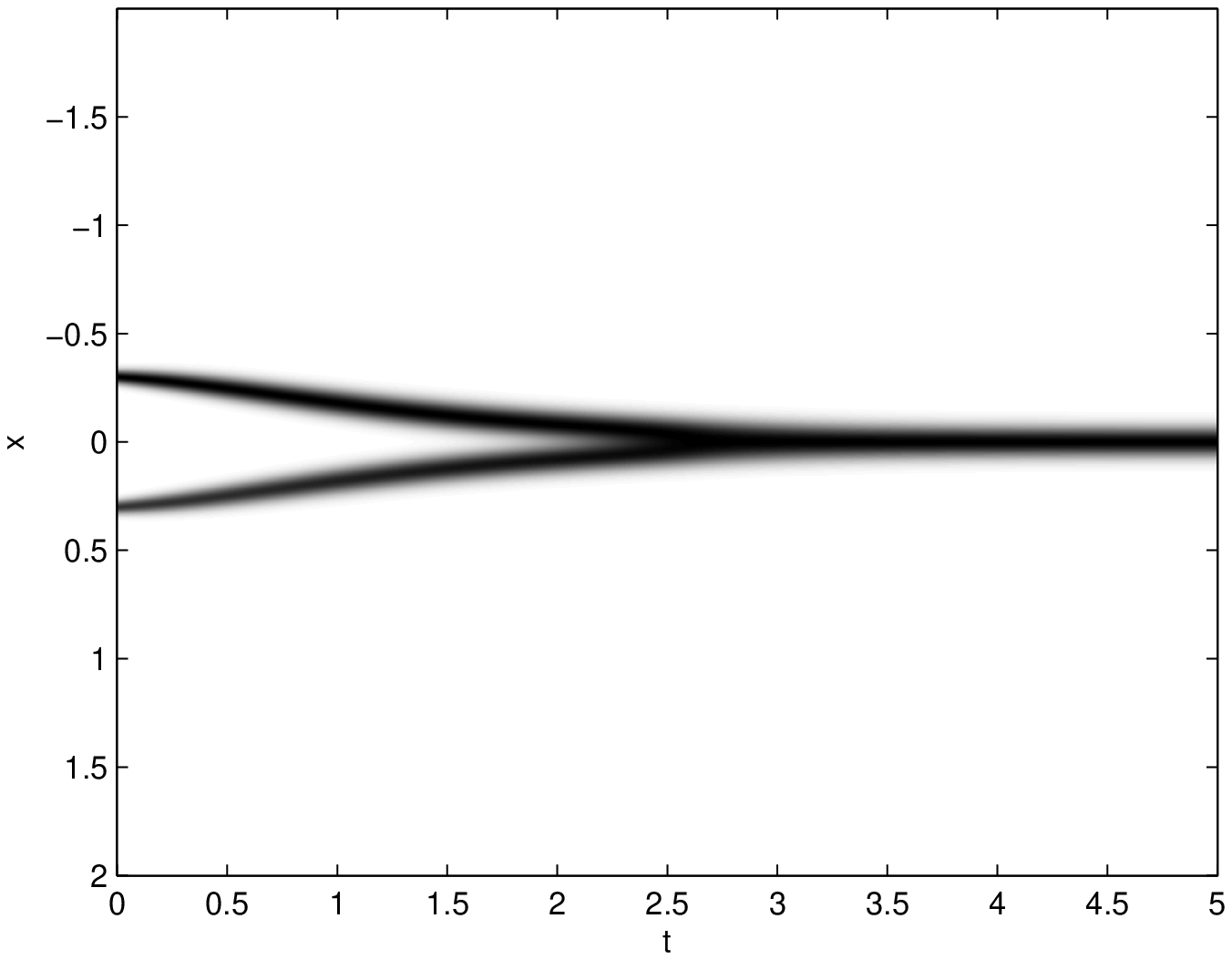}\includegraphics[angle=0,scale=0.4]{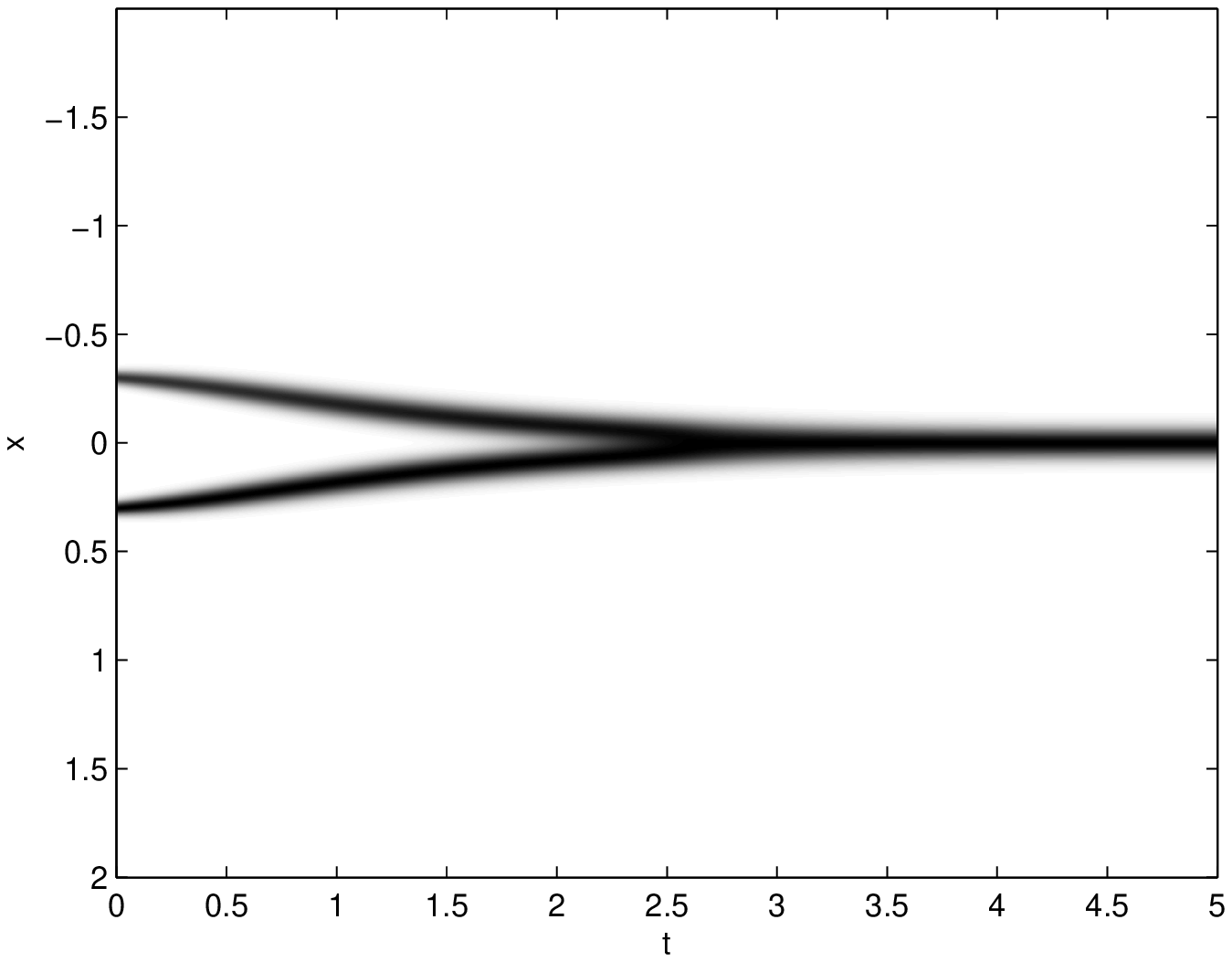}
\caption{Dynamics of the time-dependent problem \ref{main} with parameters given in \ref{data1}. In both figures, horizontally is time $t$  and vertically is trait $x$. The gray layers represent the value of $n_\ep^1$(left) and $n_\ep^2$(right). Initially  $n_\ep^1$ is concentrated on $x=-0.3$ and $n_\ep^2$ is concentrated on $x=0.3$. Due to migration both traits appear rapidly in the two patches, but in large time only one dominant trait persists. This point is the mean value of favorable traits in each patch in absence of migration. }
\label{fig:mon}
\end{figure}
Depending on the parameters of the model, one can also observe stability in large time of dimorphic situations. For instance, if we vary the values of $\nu^1$ and $\nu^2$ in \ref{data1} as follows 
\begin{equation}\label{data2}
\begin{array}{c}
R^1(x,I)=3-(x+1)^2-I,\quad R^2(x,I)=3-(x-1)^2-I,\quad \psi^1(x)=\psi^2(x)=1\\
 \nu^1=\nu^2=1,\quad {\ep=.001},\quad L=2,\end{array}
\end{equation}
then $n_\ep^1$ and $n_\ep^2$, with $(n_\ep^1,n_\ep^2)$ solution of the time-dependent problem \ref{main},  concentrate in large time  on two distinct Dirac masses, one of them more favorable to patch $1$ and the second one more favorable to patch $2$ (see Figure \ref{fig:dim}). We note indeed that, in absence of migration, the local optimal trait in patch $1$ is $x=-1$ and in patch $2$ is $x=1$. In presence of migration, the two initial traits appear immediately in the two patches and evolve to two points, one close to $x=-0.86$ and the other close to $x=0.86$.
\begin{figure}[h]
\includegraphics[angle=0,scale=0.4]{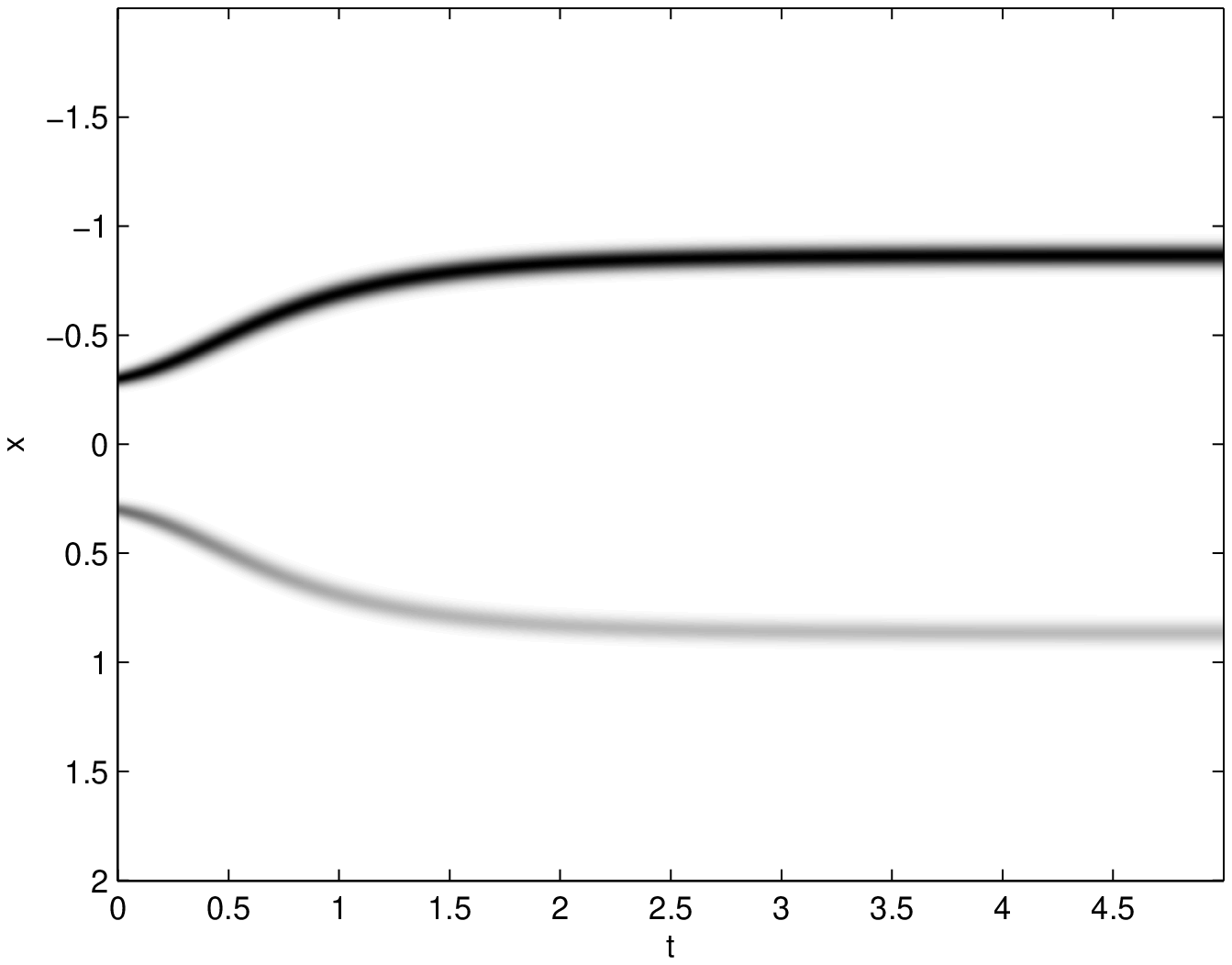}\includegraphics[angle=0,scale=0.4]{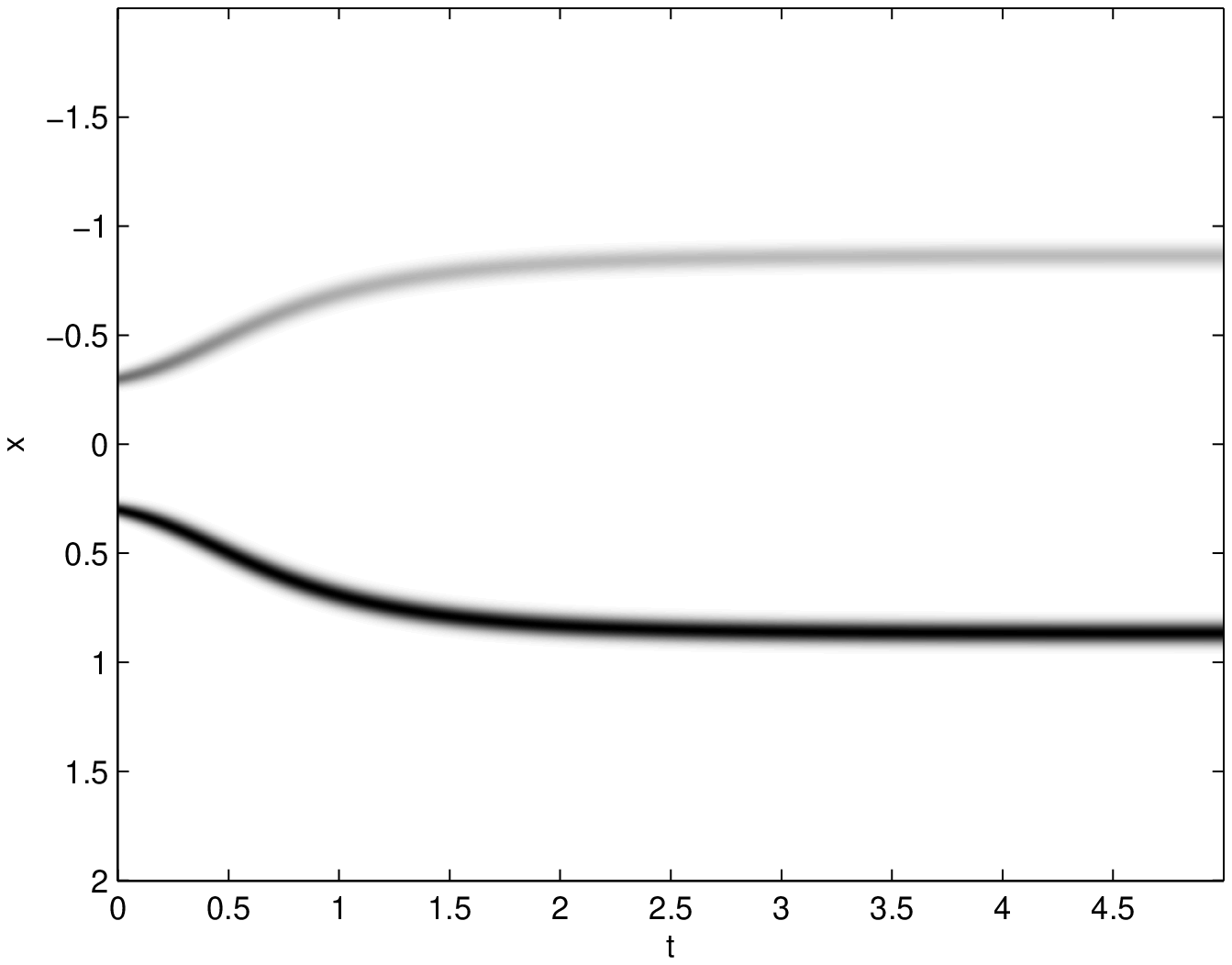}
\caption{Dynamics of the time-dependent problem \ref{main} with parameters given in \ref{data2}. In both figures, horizontally is time $t$ and vertically is trait $x$.  The gray layers represent the value of $n_\ep^1$(left) and $n_\ep^2$(right). In absence of migration, the local optimal trait in patch $1$ is $x=-1$ and in patch $2$ is $x=1$. Initially  $n_\ep^1$ is concentrated on $x=-0.3$ and $n_\ep^2$ is concentrated on $x=0.3$.  Due to migration both traits appear rapidly in the two patches, and evolve to two points close to $ -0.86$ and $0.86$.  }
\label{fig:dim}
\end{figure}

Does the above numerical solution converge in long time to the solution described by the algebraic constraints given in Theorem \ref{thm:as}?  The values of $I_\ep^1$ and $I_\ep^2$ are depicted in Figure \ref{fig:per} showing that both these quantities converge in long time to $2.25$. We can also compute the value of $H$ at the final time step. As we observe in Figure \ref{fig:H}, $\max_x H=0$ and the maximum is attained at the points $x=-.86$ and $x=.86$ which correspond to the positions of the Dirac masses in Figure \ref{fig:dim}. We can also compute numerically the weights of the Dirac masses at the final time step, to obtain 
$$n_\ep^1(t=5)\approx 1.77\, \delta(x+.86)+.48\, \delta(x-.86),\quad n_\ep^2(t=5)\approx.48\, \delta(x+.86)+1.77\, \delta(x-.86).$$
One can verify that the above weights satisfy \ref{eq:rho}--\ref{eq:I1I2}.
   
%Although, the position of Dirac masses, for the time-dependent problem \ref{main}, seems to be stable  in large time in the numerical simulations, $n_\ep^1$ and $n_\ep^2$ do not converge in general to stationary solutions. We indeed can find some cases where the \textcolor{red}{total population in patches}, $I_\ep^1$ and $I_\ep^2$ have an oscillatory behavior. In particular, with parameters given by \ref{data1} although the positions of Dirac masses converge to the origin, the total population is oscillating (see Figure \ref{fig:per}). We can compute numerically the value of the function $G(x,I_\ep^1(t),I_\ep^2(t))$, defined in \ref{def:G},  for large times. We observe that the minimum of this function is not close to $0$, for large $t$, as it should be in the case of convergence to stationary solutions according to  \ref{G:min} (see Figure \ref{fig:H}).
\begin{figure}[h]
\includegraphics[angle=0,scale=0.4]{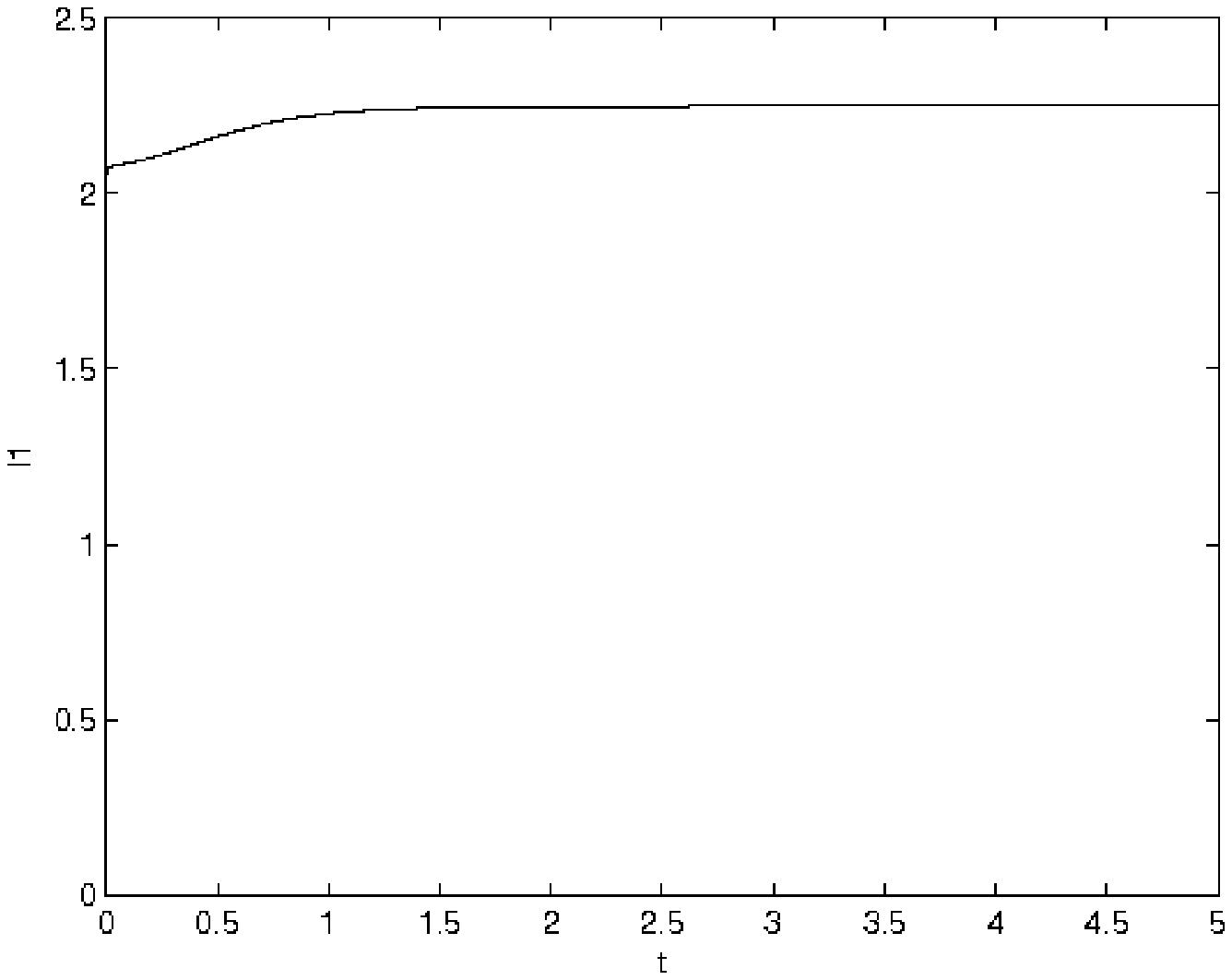}\includegraphics[angle=0,scale=0.4]{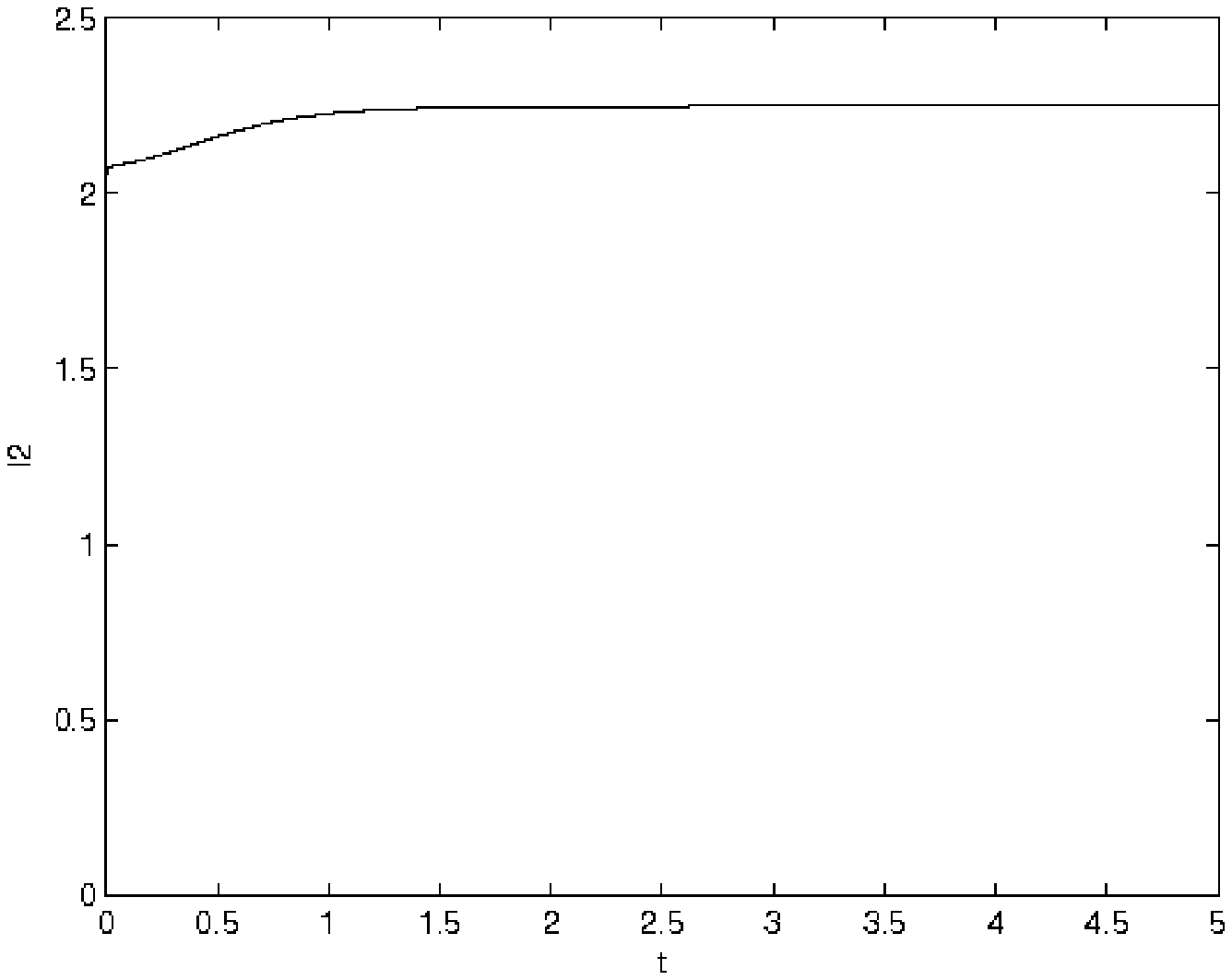}
\caption{ Dynamics of the total populations: $I_\ep^1(t)$(left) and $I_\ep^2(t)$(right), using the parameters in \ref{data1}. In both patches, the total population converges to a constant close to $2.25$.}
\label{fig:per}
\end{figure}
\begin{figure}[h]\begin{center}
\includegraphics[angle=0,scale=0.4]{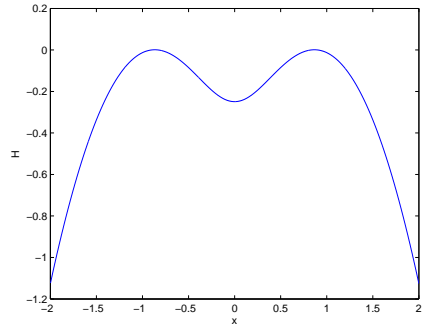}
\end{center}
\caption{The value of $H(\cdot,I_\ep^1(t),I_\ep^2(t)$ defined in \ref{def:H}, at time $t=5$.  }
\label{fig:H}
\end{figure}

\bigskip

\section*{Acknowledgments} The author benefits from a 2 year "Fondation Math\'ematique Jacques Hadamard" (FMJH) postdoc scholarship. She would like to thank Ecole Polytechnique for its hospitality. She is also grateful to Cl\'ement Fabre for useful discussions.

% You may incorporate your references as follows in your main tex file.
% Using BibTex is not recommended but can be handled.

\end{document}